\newtheorem{thrm}{Theorem}[section]
\newtheorem{lemma}[thrm]{Lemma}
\newtheorem{remark}[thrm]{Remark}
\numberwithin{equation}{section}
\def\mI{\mathbb{I} }
\def\E{\mathbb{E} }
\def\P{\mathbb{P} }
\def\R{\mathbb{R} }
\def\mR{\mathbb{R} }
\def\mT{\mathbb{T} }
\def\mE{\mathbb{E} }
\def\mN{\mathbb{N} }
\def\sI{\mathscr{I} }
\def\N{\mathbb{N} }
\def\cN{\mathcal{N} }
\def\e{{\rm e }}
\def\1{{{\mathbbm 1}}}
\def\dif{{\rm d}}
\def\bs{\boldsymbol{s}}
\def\by{\boldsymbol{y}}
\def\bmu{\boldsymbol{\mu}}
\def\p{{\partial}}
\def\div{{\rm div}}
\def\sL{{\mathscr L}}
\def\cM{{\mathcal M} }
\def\<{{\langle}}
\def\>{{\rangle}}
\def\geq{\geqslant}
\def\leq{\leqslant}
\begin{document}
	\allowdisplaybreaks
	
	\title{Heat kernel estimates for nonlocal kinetic operators}
	\author{Haojie Hou and Xicheng Zhang}
	
	\address{Haojie Hou: School of Mathematics and Statistics, Beijing Institute of Technology, Beijing 100081, China\\	Email: houhaojie@bit.edu.cn}
	
	\address{Xicheng Zhang:
		School of Mathematics and Statistics, Beijing Institute of Technology, Beijing 100081, China\\
		Faculty of Computational Mathematics and Cybernetics, Shenzhen MSU-BIT University, 518172 Shenzhen, China\\
		Email: xczhang.math@bit.edu.cn
	}

	\thanks{{\it Keywords: \rm Heat kernel estimate, Gradient estimate, nonlocal kinetic operator, $\alpha$-stable process}}

	\thanks{
		This work is supported by National Key R\&D program of China (No. 2023YFA1010103) and NNSFC grant of China (No. 12131019)  and the DFG through the CRC 1283 ``Taming uncertainty and profiting from randomness and low regularity in analysis, stochastics and their applications''. }

	\begin{abstract}		
		In this paper, we employ probabilistic techniques to derive sharp, explicit two-sided estimates for the heat kernel of the nonlocal kinetic operator
		\[
		\Delta^{\alpha/2}_v + v \cdot \nabla_x, \quad \alpha \in (0, 2),\ (x,v)\in\mR^{d}\times\mR^d,
		\]
		where \( \Delta^{\alpha/2}_v \) represents the fractional Laplacian acting on the velocity variable \( v \). Additionally, we establish logarithmic gradient estimates with respect to both the spatial variable \( x \) and the velocity variable \( v \).
		In fact, the estimates are developed for more general non-symmetric stable-like operators, demonstrating explicit dependence on the lower and upper bounds of the kernel functions.  These results, in particular, provide a solution to a fundamental problem in the study of \emph{nonlocal} kinetic operators.
		
	\end{abstract}

	\maketitle
	\setcounter{tocdepth}{2}

	\section{Introduction}
	
	Heat kernel estimates are a crucial tool in the study of partial differential equations (PDEs), particularly in the analysis of diffusion processes and heat flow. They provide detailed information about how solutions to diffusion-type equations, such as the heat equation, evolve over time. More generally, heat kernel estimates describe how heat (or probability, in the case of diffusion processes) spreads through space, with applications ranging from geometry to probability theory and mathematical physics (see \cite{CCFI11}). 
	The classical Laplacian heat equation  is given by:
	\[
	\partial_t u = \Delta u,
	\]
	where \( \Delta \) is the Laplacian operator. The fundamental solution to this equation, starting from a point source (Dirac delta function), is the \textit{Gaussian heat kernel}:
	\[
	p_t(x,y) = (4\pi t)^{-d/2} \exp\left(-\frac{|x-y|^2}{4t}\right),
	\]
	where \( x, y \in \mathbb{R}^d \) are spatial points, and \( t > 0 \) is the time variable. 
	
	Beyond the Laplacian heat equation, a second-order differential operator of divergence form typically takes the form:
	\[
	\sL^{\rm div}_{\rm loc} f = \div(a\cdot\nabla f) + \div(b f),
	\]
	and the general non-divergence second-order differential operator takes the form:
	\[
	\sL^{\rm ndiv}_{\rm loc} f =  \text{tr}(a\cdot\nabla^2f) + b \cdot \nabla f,
	\]
	where \( a(x) \) is a symmetric diffusion matrix and \( b(x) \) represents the drift term. These operators often describe diffusion processes with drift, such as stochastic processes like Brownian motion with drift. The heat kernel provides the probability distribution of a particle at time \( t \), starting from point \( x \) and moving to point \( y \).
	The heat kernel estimates for second-order operators are usually given in the Gaussian form of upper and lower bounds for \( p_t(x, y) \), depending on the distance between \( x \) and \( y \), time \( t \), and the regularities of its coefficients. For the divergence form operator \( \sL^{\rm div}_{\rm loc} \), under bounded measurable and uniformly elliptic assumptions, a celebrated result due to Aronson \cite{Ar} provides sharp two-sided Gaussian-type estimates:
	\begin{align}\label{Two1}
		C_0t^{-d/2} \exp\left( -\lambda_0\frac{|x - y|^2}{t} \right) \leq p_t(x, y) \leq C_1t^{-d/2} \exp\left( -\lambda_1\frac{|x - y|^2}{t} \right),
	\end{align}
	where the constants $\lambda_0,\lambda_1>0$ and $C_0,C_1>0$ only depend on the bounds and the dimension.
	It was shown in \cite{FS86} that the above estimates are equivalent to the parabolic Harnack inequality. For the operator \( \sL^{\rm ndiv}_{\rm loc} \) of non-divergence form, when \( a \) is bounded, uniformly elliptic, and H\"older continuous, and \( b \) is bounded and measurable, the two-sided estimates \eqref{Two1} were established in \cite{Fr75}. These two-sided estimates play a crucial role in many analytical problems, and heat kernel estimates remain a central topic in modern mathematical physics. More profound results on heat kernel estimates on manifolds can be found in the monographs \cite{Da89, Gr09}.
	
	Although the heat kernel estimates for local operators \( \sL^{\rm div}_{\rm loc} \) and \( \sL^{\rm ndiv}_{\rm loc} \) are extensively studied in the literature, nonlocal operators have also garnered great interest due to their mathematical challenges and practical applications. In recent years, significant attention has been paid to the study of heat kernel estimates for nonlocal operators, such as the fractional Laplacian \( \Delta^{\alpha/2} \), where \( \alpha \in (0,2) \), defined via Fourier's transform:
	\[
	\widehat{(-\Delta)^{\alpha/2} f}(\xi) = |\xi|^\alpha \hat f(\xi).
	\] 
	The heat equation corresponding to the fractional Laplacian is:
	\[
	\partial_t u = {-(-\Delta)^{\alpha/2} u=:\Delta^{\alpha/2} u.}
	\]
	This operator describes processes with jumps or long-range interactions, typical in Lévy flights and anomalous diffusion (cf. \cite{CMKG08}). The heat kernel for the fractional Laplacian does not have a simple closed-form expression like the Gaussian kernel, except in special cases (e.g., \( \alpha = 1 \)). However, sharp two-sided estimates are established by 
	P\'olya \cite{Po1923} for $d=1$ and Blumenthal and Getoor \cite{BG60} for $d\geq 1$: 
	for \( \alpha \in (0, 2) \), the heat kernel \( p_t(x, y) \) of $\Delta^{\alpha/2}$ satisfies:
	\begin{equation}\label{e:1.3}
		p_t(x, y) \asymp t \left( t^{1/\alpha} + |x - y| \right)^{-d - \alpha},
	\end{equation}
	where \( \asymp \) means that the two sides are comparable up to a constant depending on the dimension \( d \) and \( \alpha \). These estimates capture the spread of heat for nonlocal operators, highlighting the long-range effects of the fractional Laplacian.
	
	Over the last two decades, the study of nonlocal operators has made significant progress, including fine potential theoretical properties and the development of De Giorgi-Nash-Moser-Aronson-type theory for symmetric and nonsymmetric nonlocal operators; see, e.g., \cite{Ko88, Kol, BL2, CK03, Bo-Ja0, CK08, C09, CaS1, KSV,  KK18, KKS21, KW24, BKKL, BSK} and the references therein. In particular, Chen and Kumagai \cite{CK03} showed that, for every \( \alpha \in (0,2) \) and for any symmetric measurable function \( \kappa(x, y) \) on \( \mathbb{R}^d \times \mathbb{R}^d \) with \( \kappa_0 \leq \kappa(x,y) \leq \kappa_1 \), the symmetric nonlocal operator
	\begin{equation}\label{e:1.2}
		\sL^{\rm sym}_{\rm nloc} f(x) = \lim_{\epsilon \to 0} \int_{\{ y \in \mathbb{R}^d : |y - x| > \epsilon \}} (f(y) - f(x)) \frac{\kappa(x, y)}{|x-y|^{d+\alpha}} \, \dif y,
	\end{equation}
	defined in the weak sense, admits a jointly H\"older continuous heat kernel \( p_t(x, y) \) with respect to the Lebesgue measure on \( \mathbb{R}^d \), which satisfies \eqref{e:1.3} for all \( t > 0 \) and \( x, y \in \mathbb{R}^d \), where the implicit constant depends only on \( d, \alpha, \kappa_0 \), and \( \kappa_1 \). The two-sided estimate of heat kernel  and Harnack inequalities for symmetric nonlocal operators on metric measure space can be found in \cite{CPKW22} and references therein.
	Meanwhile, consider the following nonsymmetric nonlocal operators:
	\[
	\sL^{\rm nsym}_{\rm nloc} f(x) = \int_{\mathbb{R}^d}(f(x + y) - f(x) - \1_{|y| \leq 1}y \cdot \nabla f(x) \1_{\alpha \in [1,2)}) \frac{\kappa(x, y)}{|y|^{d+\alpha}} \, \dif y,
	\]
	where \( \kappa_0 \leq \kappa(x,y) \leq \kappa_1 \) is a bi-measurable function and $\kappa(x,-y)=\kappa(x,y)$. 
	When \( \kappa \) is H\"older/Dini continuous in $x$ uniformly in $y$, sharp two-sided estimates of heat kernel were obtained in \cite{Ch-Zh, Ch-Zh2,KKS21, MZ22, CZ23}, see also the references therein.
	We emphasize that the constants appearing in all the above results depend on \( \kappa_0 \) and \( \kappa_1 \) implicitly, rather than explicitly.
	
	The local and nonlocal operators discussed above are non-degenerate in the sense that diffusion occurs uniformly in all directions. However, in many applications, particularly in kinetic theory, one encounters degenerate operators where diffusion is not isotropic. These so-called \textit{kinetic operators} often arise in the study of Boltzmann-type equations and kinetic models, such as the Boltzmann equation and the Fokker–Planck equations with nonlocal interactions, which describe the statistical behavior of a gas (see \cite{C88, Vi02}). A typical kinetic operator takes the form:
	\[
	\sL_{\rm kin} f(x,v) = v \cdot \nabla_x f(x,v) + \mathscr{C}f(x,\cdot)(v),
	\]
	where \( v \in \mathbb{R}^d \) represents the velocity variable, \( x \in \mathbb{R}^d \) is the spatial variable, and \( \mathscr{C} \) is an integral-differential operator, such as a collision operator. The solution to a kinetic equation describes the evolution of a probability distribution in phase space \( (x, v) \) over time (see \cite{C88}). It models dynamics that involve both random jumps in velocity (captured by \( \mathscr{C} \)) and transport in space (captured by the drift term \( v \cdot \nabla_x \)).
	A toy model of kinetic operators is of the form (see \cite{A12}):
	\[
	\Delta^{\alpha/2}_v + v \cdot \nabla_x, \quad \alpha \in (0,2].
	\]
	The coupling between space and velocity introduces significant challenges in the analysis of heat kernel estimates for kinetic operators. When \( \alpha = 2 \), in terms of the Brownian diffusion equation, Kolmogorov \cite{Ko34} first wrote down the fundamental solution of \( \Delta_v + v \cdot \nabla_x \) as
	$$
	p_t(x_0,v_0,x,v) = \left(\frac{\sqrt{3}}{2\pi t^2}\right)^d \exp\left\{-\frac{|x - x_0 - t v_0|^2}{2t} + \frac{3(x - x_0 - t v_0)(v - v_0)}{2t^2} - \frac{3|v - v_0|^2}{2t^3}\right\}.
	$$
	From this expression, it is easy to derive the following two-sided estimate:
	\begin{align*}
		&\left(\frac{\sqrt{3}}{2\pi t^2}\right)^d\exp\left\{-\frac{4+\sqrt{13}}{4}\left(\frac{|x - x_0 - t v_0|^2}{t} + \frac{|v - v_0|^2}{t^3}\right)\right\} \leq p_t(x_0,v_0,x,v) \\
		&\qquad \leq \left(\frac{\sqrt{3}}{2\pi t^2}\right)^d \exp\left\{-\frac{4-\sqrt{13}}{4}\left(\frac{|x - x_0 - t v_0|^2}{t} + \frac{|v - v_0|^2}{t^3}\right)\right\}.
	\end{align*}

	Recent research has made progress in deriving sharp two-sided heat kernel estimates for more general local kinetic operators. In \cite{DM10}, Delarue and Menozzi studied Gaussian-type two-sided estimates for general second-order hypoelliptic operators. More recently, the authors in \cite{CMPZ23} and \cite{RZ24} investigated heat kernel estimates for the density of kinetic SDEs with rough coefficients using probabilistic techniques. These estimates are crucial for understanding the behavior of solutions to kinetic equations and for obtaining precise gradient bounds in both space and velocity.
	It is worth noting that kinetic SDEs, also known as Langevin diffusion processes, have been extensively studied in terms of their long-time behavior and convergence to a stationary measure (see \cite{MSH02, HN04, Vi09}).
	
	As previously discussed, in the context of heat kernel estimates for kinetic operators, one often encounters \emph{non-local} operators, particularly those with \( \alpha \in (0,2) \), acting on the velocity variable. This non-locality introduces additional complexity compared to the local case. On one hand, $L^p$-maximal regularity and Schauder estimates for non-local kinetic operators have been studied in \cite{Ch-Zh4, HMP19} and \cite{HWZ19, IS18}, respectively. On the other hand, it is well known through Fourier analysis that the heat kernel of a non-local kinetic operator can be expressed as
	\[
	p_t(x,v) := p_t(0,0,x,v) = \int_{\mathbb{R}^{2d}} \mathrm{e}^{\mathrm{i}(x \cdot \xi + v \cdot \eta)} \mathrm{e}^{-\int_0^1 |s\xi + \eta|^\alpha \, \mathrm{d}s} \, \dif\xi \dif\eta.
	\]
	Although the above expression appears simple, since the seminal work of P\'olya \cite{Po1923} and
	Blumenthal and Getoor \cite{BG60}, obtaining two-sided asymptotic estimates for \( p_t(x,v) \) as \( x, v \to \infty \) has remained an open problem for a considerable time.
	
	The purpose of this paper is to provide a conclusive solution to this question by establishing sharp, explicit, two-sided asymptotic estimates for \( p_t(x,v) \) as \( (x,v) \to \infty \). It is anticipated that these two-sided estimates will prove valuable in the future analysis of the behavior of solutions to the Boltzmann equation (cf. \cite{IS22}).
	
	\subsection{Main result}
	In this section, we present our main result. Before doing so, we first introduce the kinetic operators under consideration and outline the primary assumptions. Specifically, we consider general stable-like non-local operators, which are commonly used to study kinetic operators with variable coefficients. Let \(\alpha \in (0, 2)\) and \(\kappa: \mathbb{R}^d \to (0, \infty)\) be a measurable function satisfying
	\begin{align} \label{Ka1}
		0 < \kappa_0 \leq \kappa(x) \leq \kappa_1, \quad \int_{\{|\omega|=r\}}\omega \kappa(\omega)\dif \omega = 0, \quad \forall r>0.
	\end{align}
	For simplicity of notation, we define
	$$
	\nu(\dif x) := \frac{\kappa(x)}{|x|^{d + \alpha}} \dif x,
	$$
	which is called the L\'evy measure. The second condition in \eqref{Ka1} is equivalent to
	\begin{align} \label{Ka11}
		\int_{R_0\leq |x|\leq R_1}x \, \nu(\dif x) = 0, \quad 0<R_0\leq R_1<\infty.
	\end{align}
	Let \(L_t = L^\nu_t\) be a \(d\)-dimensional L\'evy process with characteristic exponent
	\begin{align}\label{LE1}
		\psi(\xi) = \psi_\nu(\xi) = \int_{\mathbb{R}^{d} \setminus \{0\}} \left(1 - \e^{\mathrm{i} \xi \cdot x} + \mathrm{i} \xi \cdot x\1_{\{|x|\leq 1\}}\right)\nu(\mathrm{d}x).
	\end{align}
	By the L\'evy-Khintchine formula (see \cite{Sato}), we have
	\[
	\E \left(\e^{\mathrm{i} \xi \cdot L_t}\right) = \e^{-t\psi(\xi)}, \quad t \geq 0, \, \xi \in \mathbb{R}^d.
	\]
	The condition \eqref{Ka11} ensures that \(L_t\) has zero mean; otherwise, \(L_t\) would include a linear drift term. Furthermore, \eqref{Ka11} implies the following scaling property: for \(\lambda>0\),
	\begin{align}\label{MC1}
		(L^\nu_{\lambda t})_{t \geq 0}\stackrel{(d)}{=}\lambda^{1/\alpha}(L^{\nu_\lambda}_t)_{t \geq 0}, \quad \nu_\lambda(\dif x) := \frac{\kappa(\lambda^{1/\alpha}x)}{|x|^{d + \alpha}} \dif x.
	\end{align}
	Indeed, by a change of variables, we have
	\begin{align*}
		\mathbb{E} \left(\e^{\mathrm{i} \xi \cdot L^\nu_{\lambda t}}\right) = \e^{-\lambda t\psi_\nu(\xi)} = \e^{- t\psi_{\nu_\lambda}(\lambda^{1/\alpha}\xi)} = \mathbb{E} \left(\e^{\mathrm{i}\xi \cdot \lambda^{1/\alpha} L^{\nu_\lambda}_{t}}\right).
	\end{align*}
	
	For a given \(z_0 = (x_0, v_0)\), consider the Markov process \(Z_t(z_0)\) in \(\mathbb{R}^{2d}\) starting from \(z_0\),
	\[
	Z_t(z_0) := \left(x_0 + tv_0 + \int_0^t L_s \mathrm{d}s, v_0 + L_t\right) =: (x_0 + tv_0 + X_t, v_0 + V_t).
	\]
	By Itô's formula, we have (see \cite{Ch-Zh4})
	\[
	\partial_t \mathbb{E} f(Z_t(z_0)) = \mathbb{E} \mathscr{K}_\nu f(Z_t(z_0)),
	\]
	where \(\mathscr{K}_\nu\) is the non-local kinetic operator defined by
	\[
	\mathscr{K}_\nu f(x, v) = \mathscr{L}^{\nu} f(x, \cdot)(v) + v \cdot \nabla_x f(x, v),
	\]
	and \(\mathscr{L}^{\nu}\) is the infinitesimal generator of \(L\), given by
	\begin{align*}
		\mathscr{L}^{\nu} f(v) &= \int_{\mathbb{R}^d} \left(f(v + w) - f(v) - \1_{\{|w|\leq 1\}}w \cdot \nabla f(v)\right) \nu(\dif w).
	\end{align*}
	As shown in \cite[Lemma 2.5]{Ch-Zh4}, under condition \eqref{Ka1} (although \cite{Ch-Zh4} considers the symmetric case, the result can be extended to the general case \eqref{Ka1}, see Lemma \ref{lemma:small-jump} below), for any \(t > 0\), the process \(Z_t(z_0)\) admits a smooth density \(p_t(z_0, z)\), meaning that for any bounded measurable   \(f: \mathbb{R}^{2d} \to \mathbb{R}\),
	\begin{align}\label{DY1}
		\int_{\mathbb{R}^{2d}} f(z) p_t(z_0, z) \, \mathrm{d}z = \mathbb{E} f(Z_t(z_0)).
	\end{align}
	In particular, the density \(p_t(z_0, \cdot)\) of \(Z_t(z_0)\) is the heat kernel of the kinetic operator \(\mathscr{K}_\nu\), i.e.,
	$$
	\p_tp_t(z_0, z) = \mathscr{K}_\nu^* p_t(z_0, \cdot)(z).
	$$
	Moreover, by \eqref{DY1}, we easily see that
	\begin{align} \label{DQ1}
		p_t(z_0, z) = p_t(0, z - \theta_t z_0), \quad \theta_t z := (x + tv, v) \in \mathbb{R}^{2d}.
	\end{align}
	Hence, to obtain two-sided estimates of \(p_t(z_0, z)\), it suffices to consider \(p_t(0, z) := p_t^\nu(z)\), which is the density of 
	\begin{align} \label{DQ2}
		Z^\nu_t :=  Z_t(0) = \left(\int_0^t L_s \mathrm{d}s, L_t\right) = (X_t, V_t) =: Z_t.
	\end{align}
	Here is a figure illustrating \((Z^\nu_t)_{t \in [0,1]}\) for \(\kappa(x) = 1\) and \(\alpha = 1.5\).
	\begin{figure}[ht]
		\centering
		\includegraphics[width=4in, height=2in]{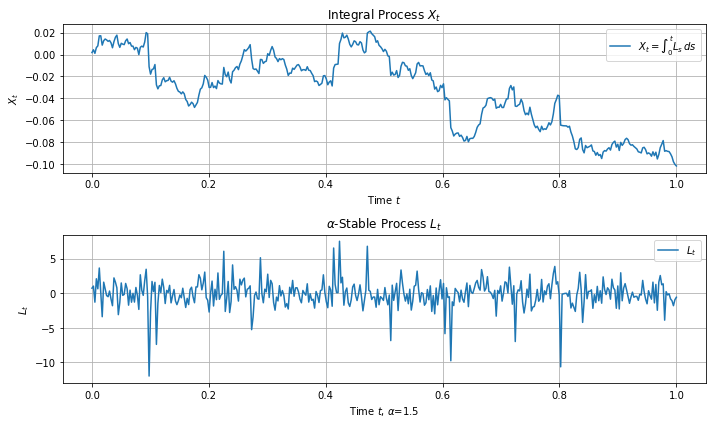}
		\label{fig:sd21}
	\end{figure}
	
	To state the main results, for \(z=(x,v)\in\mathbb{R}^{2d}\) and \(t>0\), we introduce the following notations:
	\begin{align}\label{TT1}
		\mT^\alpha_t z := (t^{-\frac1\alpha-1}x,t^{-\frac1\alpha}v), \quad \Gamma_t z := x - tv.
	\end{align}
	
	\begin{thrm}\label{thm1}
		We assume \eqref{Ka1} holds and for \(\beta>1\), define
		\begin{align}\label{Nbeta}
			\mathcal{N}_\beta(z) := \frac{1}{(1+|z|)^{1+\beta}}\left(\inf_{s\in [0,1]}|\Gamma_s z| +1 \right)^{1-\beta}.
		\end{align}
		\begin{enumerate}[(i)]
			\item  {\bf (Lower bound estimate):} For \(c_0^* := \int_{|y|>1/3} \frac{\dif y}{|y|^{d+\alpha}}\), there exists a constant \(C = C(d,\alpha) > 0\) such that for all \(z\in\mathbb{R}^{2d}\) and \(t>0\),
			\begin{align}\label{TSE1}
				p_t^\nu(z) \gtrsim_C  \kappa_0^{-\frac{2d}\alpha} \e^{-c_0^* \left(\frac{\kappa_1}{\kappa_0}\right)}   t^{-\frac{2d}{\alpha} -d} \mathcal{N}_{d+\alpha}\left(\kappa_0^{-\frac1\alpha}\mT^\alpha_{t}z\right).
			\end{align}
			
			\item {\bf (Upper bound and gradient estimates):} For any \(j_x, j_v\in\mathbb{N}_0:=\{0\}\cup\mN\), there exists a constant \(C = C(d,\alpha,j_x,j_v) > 0\) such that for all \(z \in \mathbb{R}^{2d}\) and \(t>0\),
			\begin{align}\label{TSE3}
				|\nabla^{j_x}_x \nabla^{j_v}_v p_t^\nu(z)| \lesssim_C \kappa_0^{-\frac{2d+j_x+j_v}{\alpha}} \left(\frac{\kappa_1}{\kappa_0}\right)^{3+4d+3\alpha} t^{-\frac{2d+j_x+j_v}{\alpha} - j_x - d} \mathcal{N}_{d+\alpha}\left(\kappa_0^{-\frac1\alpha}\mT^\alpha_{t}z\right).
			\end{align}
		\end{enumerate}
	\end{thrm}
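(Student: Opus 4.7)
The plan is to reduce to unit time via the scaling identity \eqref{MC1} and then combine a Lévy–Itô decomposition of the driving process $L$ with sharp pointwise derivative bounds on the ``small-jump'' density. By \eqref{MC1} and \eqref{DQ2} one verifies $p^\nu_t(z) = t^{-2d/\alpha - d}\, p^{\nu_t}_1(\mT^\alpha_t z)$, where $\nu_t$ still satisfies \eqref{Ka1} with the same constants $\kappa_0,\kappa_1$. This produces both the prefactor $t^{-(2d+j_x+j_v)/\alpha - j_x - d}$ and the argument $\mT^\alpha_t z$ inside $\mathcal{N}_{d+\alpha}$, so it suffices to prove both estimates at $t=1$ with explicit dependence on $\kappa_0,\kappa_1$.

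\textbf{Lévy–Itô decomposition and small-jump bound.} Split $\nu = \nu^s + \nu^b$ with $\nu^s$ supported in a unit ball $B_R$ (compensated if $\alpha\geq 1$) and $\nu^b$ its restriction to $B_R^c$. Writing $L = L^s + L^b$ with $L^b$ an independent compound Poisson process of rate $\lambda := \nu(B_R^c)\leq c_0^*\kappa_1$, one gets $Z_1 = Z^s_1 + Z^b_1$ independent and the convolution identity
\begin{align*}
p^\nu_1(z) = \int p^s_1(z - z')\, \P(Z^b_1\in\dif z'),
\end{align*}
where the law of $Z^b_1$ expands explicitly over the number $n$ of large jumps: given jump sizes $w_1,\dots, w_n$ and times $\tau_1,\dots,\tau_n\in[0,1]$, one has $Z^b_1 = \sum_i((1-\tau_i)w_i, w_i)$. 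The crucial analytic input is a sharp derivative bound on $p^s_1$. Starting from the Fourier representation
\begin{align*}
p^s_1(z) = (2\pi)^{-2d}\int_{\mR^{2d}} \e^{-\mathrm{i}(x\cdot\xi + v\cdot\eta)}\exp\Bigl(-\int_0^1\psi^s(s\xi+\eta)\,\dif s\Bigr)\,\dif\xi\,\dif\eta,
\end{align*}
the key identity $x\cdot\xi + v\cdot\eta = \Gamma_s z\cdot\xi + v\cdot(\eta+s\xi)$, together with the elliptic lower bound $\mathrm{Re}\,\psi^s(\zeta)\gtrsim\kappa_0(|\zeta|^\alpha\wedge|\zeta|^2)$, allows repeated integration by parts in $(\xi,\eta)$, re-centered at the minimizer $s_0(z)$ of $s\mapsto|\Gamma_s z|$. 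This should yield, for every $N\geq 1$,
\begin{align*}
|\nabla^{j_x}_x\nabla^{j_v}_v p^s_1(z)| \lesssim \kappa_0^{-(2d+j_x+j_v)/\alpha}(\kappa_1/\kappa_0)^C\, \mathcal{N}_N(z).
\end{align*}

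\textbf{Assembly of the two bounds.} For the upper bound \eqref{TSE3}, expand $p^\nu_1 = p^s_1*\P(Z^b_1\in\cdot)$ in $n$: the $n=0$ contribution $\e^{-\lambda}p^s_1(z)$ is already of the right order on taking $N=d+\alpha$ above. The dominant tail term is $n=1$: a direct computation shows that the integral of $p^s_1\bigl(z - ((1-\tau)w, w)\bigr)$ against $\dif\tau\,\nu(\dif w)$ concentrates near the one-jump reachable set $\{x = (1-\tau)v,\ \tau\in[0,1]\}$ and reproduces exactly $(1+|z|)^{-1-d-\alpha}(\inf_s|\Gamma_sz|+1)^{1-d-\alpha}$; the higher-order terms $n\geq 2$ are of lower order by the $|w|^{-d-\alpha}$ decay of $\nu$. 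For the lower bound \eqref{TSE1}, retain the $n=0$ term in the bulk regime $|z|\lesssim 1$ and, in the tail regime, insert a single big jump of size $\sim v$ at time $\sim 1-s_0(z)$ and use a matching two-sided lower bound for $p^s_1$ near the characteristic curve; the exponential factor $\e^{-c_0^*(\kappa_1/\kappa_0)}$ captures the probability of no other large jumps.

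\textbf{Main obstacle.} The principal difficulty is the sharp small-jump bound in the \emph{kinetic-degenerate} form $\mathcal{N}_N(z)$: a naive Fourier inversion delivers only the isotropic decay $(1+|z|)^{-N}$, which is too weak to produce the sharp factor involving $\inf_s|\Gamma_s z|$. Recovering the correct exponent $1-N$ on $(\inf_s|\Gamma_sz|+1)$ requires integrating by parts in $\xi$ \emph{only after} the substitution $\eta\mapsto\eta+s_0(z)\xi$, which in turn forces a delicate case analysis depending on whether $s_0(z)$ is an interior or boundary minimizer in $[0,1]$. A secondary challenge is the explicit bookkeeping of $\kappa_0,\kappa_1$: the algebraic factor $(\kappa_1/\kappa_0)^{3+4d+3\alpha}$ must be extracted from the Fourier integration-by-parts constants, while the exponential $\e^{-c_0^*(\kappa_1/\kappa_0)}$ must be tracked through the Poisson probabilities of the large-jump process.
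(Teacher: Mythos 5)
Your skeleton (scale to $t=1$, Lévy--Itô split into small and large jumps, convolve) matches the paper, but the heart of your argument --- how the anisotropic polynomial factor $\mathcal{N}_{d+\alpha}$ emerges --- is misplaced, and the one-jump picture you build on breaks down for generic $z$.

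\textbf{Where the gap is.} You attribute the anisotropic decay to the small-jump density $p^s_1$, proposing to extract $\mathcal{N}_N(z)$ from it by a recentered integration by parts. In fact the small-jump density decays super-polynomially: under the analogue of \eqref{Con3} the paper shows $q_1(z)\asymp\e^{-\delta(|z|+2)\log(|z|+2)}$ (Lemma~2.7), so it is far smaller than any $\mathcal{N}_N(z)$ and contributes nothing to the polynomial tail. The paper therefore only needs the \emph{isotropic} polynomial bound $|\nabla^j q_1(z)|\lesssim\kappa_1^\beta(1+|z|)^{-\beta}$ (Lemma~\ref{lemma:small-jump}), obtained by a straightforward Fourier argument with no recentering. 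The anisotropic factor comes entirely from the distribution of the compound-Poisson part $Z^{(1)}_1$, specifically from the series over the number $n$ of large jumps (the paper's $I_n$ in Lemma~\ref{lemma2}).

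\textbf{Why one big jump is not enough.} You claim the $n=1$ term dominates and that, for the lower bound, a single large jump of size $\sim v$ near time $1-s_0(z)$ suffices. With one jump of size $w$ at time $\tau$, $Z^{(1)}_1=((1-\tau)w,\,w)$ lies on the two-dimensional ruled surface $\{(sw,w)\}$; if $w\approx v$ and $\tau\approx 1-s_0(z)$, the deficit $z-Z^{(1)}_1$ has size of order $\inf_{s\in[0,1]}|\Gamma_s z|$, which for $x\perp v$ is $\sim|z|$. The small-jump density has to cover this deficit, but it decays like $\e^{-\delta|z|\log|z|}$, which is negligible compared with the desired $\mathcal{N}_{d+\alpha}(z)\gtrsim|z|^{-2(d+\alpha)}$. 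The paper precisely for this reason uses $I_2$ (two jumps) to prove the lower bound of $\P(Z^{(1)}_1\in Q_1(z))$ in \eqref{Goal-2}: with two jumps of adjustable sizes and times, one can reach any $(x,v)$ at the correct rate. Similarly, your claim that the $n\geq 2$ terms are ``lower order'' is misleading: the paper shows each $I_n$, $n\geq 2$, contributes at the \emph{same} anisotropic order (up to $n^{2+2d+2\alpha}/n!$ factors), and the upper bound comes from summing the whole series, not from $n=0,1$. Without the series estimate on $I_n$ (the paper's Steps 1--3 of Lemma~\ref{lemma2}) and the conversion of $\int_0^1(|\Gamma_s z|+1)^{-\beta}\dif s$ into $\mathcal{M}_\beta(z)/(|z|+1)$ (Lemma~\ref{lemma3}), your proposal has no mechanism to produce $\mathcal{N}_{d+\alpha}$.
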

	
	\begin{remark}\rm
		For simplicity of presentation, we consider here only the time-homogeneous case. 
We believe that  our heat kernel estimates can be extended to the time-dependent kernel \( \kappa(t, x) \), as considered in \cite{Ch-Zh4}, without essential difficulty. In a future work, we shall consider the time and spatial inhomogeneous nonlocal kinetic operators.
	\end{remark}
	
	\begin{remark}\rm	
		In the gradient estimate \eqref{TSE3}, the explicit dependence on \(\kappa_0\) and \(\kappa_1\) is useful when we consider variant and growth coefficients.
		By \eqref{TSE1} and \eqref{TSE3}, we immediately have the following logarithmic gradient estimate: for some \(C=C(d,\alpha)>0\),
		\[
		t^{1/\alpha+1}|\nabla_x\log p_t^\nu(z)|+t^{1/\alpha}|\nabla_v\log p_t^\nu(z)|\lesssim_C \kappa_0^{-\frac1\alpha}\e^{c_0^* \left(\frac{\kappa_1}{\kappa_0}\right)}\left(\frac{\kappa_1}{\kappa_0}\right)^{3+4d+3\alpha} .
		\]
	\end{remark}
	
	\begin{remark}\label{Remark}\rm
		Unlike the local case, it is difficult to conjecture the correct form of the two-sided bounds. Furthermore, there does not seem to be an intuitive reason why the two-sided estimates of \( p_1^\nu(z) \) should resemble \( \mathcal{N}_{d+\alpha}(z) \). 
		Let \(\bar{v} := v/|v|\). Noting that 
		\[
		|x - sv|^2 = (s|v| - \langle x, \bar{v} \rangle)^2 + |x|^2 - \langle x, \bar{v} \rangle^2,
		\]
		we have
		\begin{align}\label{AA6}
			\mathcal{N}_\beta(z)
			= \frac{1}{(1+|z|)^{1+\beta}}
			\left\{
			\begin{aligned}
				&(1 + |x|)^{1-\beta}, & \langle x, v \rangle \leq 0,\\
				&(1 + \sqrt{|x|^2 - \langle x, \bar{v} \rangle^2})^{1-\beta}, & 0 < \langle x, v \rangle \leq |v|^2,\\
				&(1 + |x - v|)^{1-\beta}, & \langle x, v \rangle > |v|^2.
			\end{aligned}
			\right.
		\end{align}
		Here is a picture of the above three cases:
		
		\begin{tikzpicture}[scale=1.5]
			
			\draw[->] (-3, 0) -- (3, 0) node[right] {$x$};
			\draw[->] (0, -2) -- (0, 2) node[above] {$v$};
			
			\fill[red!30, opacity=0.6] (-3,0) -- (0,0) -- (0,2) -- (-3,2)--cycle;
			\fill[red!30, opacity=0.6] (3,-2) -- (3,0) -- (0,0) -- (0,-2)--cycle;
			\draw [red!30, fill=red!30] (4,0.9) -- (3.75,0.9) -- (3.75,1.1) -- (4,1.1);
			\node at (1.5, -1) {$(1+|x|)^{1-\beta}$};
			\node at (-1.5, 1) {$(1+|x|)^{1-\beta}$};
			\node at (5.15, 1) { $\< x, v\> < 0$};

			\fill[green!30, opacity=0.6] (0,0) -- (3,0) -- (3,2) -- cycle;
			\fill[green!30, opacity=0.6] (0,0) -- (-3,0) -- (-3,-2) -- cycle;
			\node at (1.75, 0.25) {$(1+\sqrt{|x|^2-\<x,\bar v\>^2})^{1-\beta}$};
			\node at (-1.75, -0.25) {$(1+\sqrt{|x|^2-\<x,\bar v\>^2})^{1-\beta}$};
			\node at (5.5, 0) {$0 \leq \< x, v\>  \leq |v|^2$};
			\draw [green!30, fill=green!30] (4,-0.1) -- (3.75,-0.1) -- (3.75,0.1) -- (4,0.1);

			\fill[blue!30, opacity=0.6] (0,0) -- (3,2) -- (0,2) -- cycle;
			\fill[blue!30, opacity=0.6] (0,0) -- (-3,-2) -- (0,-2) -- cycle;
			\node at (5.3, -1) { $\< x, v\>  > |v|^2$};
			\node at (1, 1.2) {$(1+|x-v|)^{1-\beta}$};
			\node at (-1, -1.4) {$(1+|x-v|)^{1-\beta}$};
			\draw [blue!30, fill=blue!30] (4,-1.1) -- (3.75,-1.1) -- (3.75,-0.9) -- (4,-0.9);
			
			\node at (1.75,1.75) {$(x, v)$ plane};
			
		\end{tikzpicture}
		
	\end{remark}
	
	The following lemma, proven in Section 3, provides some useful integral estimates for \(\cN_\beta\).
	\begin{lemma}\label{Le11}
		Let $\beta > d$ and $q\geq 0$. If $q\geq 2\beta-d$, then for any $v\in\mathbb{R}^d$,
		\begin{align}\label{AG6}
			\int_{\mathbb{R}^{d}} |x|^q\mathcal{N}_\beta(x,v)\,\mathrm{d}x = \infty.
		\end{align}
		If $q<2\beta-d$, then there is a constant $C=C(q,\beta,d)\geq 1$ such that
		\begin{align}\label{Ineq-in-remark2}
			\int_{\mathbb{R}^{d}} |x|^q\mathcal{N}_\beta(x,v)\,\mathrm{d}x \asymp_C (1+|v|)^{q-\beta}.
		\end{align}
	\end{lemma}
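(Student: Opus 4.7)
The plan is to pass to cylindrical coordinates adapted to $v$, reducing the $d$-dimensional integral to products of elementary one-dimensional ones. Throughout I use that $1-\beta<0$ makes $r\mapsto (1+r)^{1-\beta}$ decreasing.

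\textbf{Divergence \eqref{AG6}.} Restrict to $A_v:=\{x:|x|\geq |v|\vee 1\}$. On $A_v$ we have $|(x,v)|\leq 2|x|$, and taking $s=0$ in the infimum yields $\inf_{s\in[0,1]}|x-sv|\leq |x|$, so
\[
\mathcal{N}_\beta(x,v)\geq \frac{(1+|x|)^{1-\beta}}{(1+2|x|)^{1+\beta}}\gtrsim |x|^{-2\beta}.
\]
Passing to polar coordinates, $\int_{A_v}|x|^q\mathcal{N}_\beta(x,v)\dif x\gtrsim \int_{|v|\vee 1}^\infty r^{q+d-1-2\beta}\dif r=\infty$ whenever $q\geq 2\beta-d$.

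\textbf{Two-sided bound \eqref{Ineq-in-remark2}.} When $|v|\leq 1$, $(1+|(x,v)|)\asymp 1+|x|$ and $\inf_{s\in[0,1]}|x-sv|\in[|x|-1,|x|+1]$, so $\mathcal{N}_\beta(x,v)\asymp (1+|x|)^{-2\beta}$ and the integral is a finite positive constant (using $q<2\beta-d$), matching $(1+|v|)^{q-\beta}\asymp 1$. For $|v|\geq 1$, set $\hat v:=v/|v|$ and decompose $x=\lambda\hat v+y$ with $\lambda=\langle x,\hat v\rangle\in\mathbb{R}$ and $y\in\hat v^\perp\simeq\mathbb{R}^{d-1}$, so $\dif x=\dif\lambda\,\dif y$, $|x|^2=\lambda^2+|y|^2$, and (as in Remark \ref{Remark})
\[
\inf_{s\in[0,1]}|x-sv|=
\begin{cases}
\sqrt{\lambda^2+|y|^2}, & \lambda\leq 0,\\
|y|, & 0\leq\lambda\leq |v|,\\
\sqrt{(\lambda-|v|)^2+|y|^2}, & \lambda\geq |v|.
\end{cases}
\]
Split the integral into the three regions $I_-,I_{\rm mid},I_+$ accordingly. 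Using that $(1+|(x,v)|)$ is comparable to $|v|+|y|+|\lambda|$, to $|v|+|y|$, or to $\lambda+|y|$ on the three regions respectively, together with $|x|^q\lesssim |\lambda|^q+|y|^q$ (on $I_-,I_{\rm mid}$) or $|x|^q\lesssim (\lambda-|v|)^q+|v|^q+|y|^q$ (on $I_+$), each contribution reduces via the Beta-type estimate
\[
\int_0^\infty \rho^a(\rho+R)^{-b}\,\dif\rho\asymp R^{a+1-b},\qquad R\geq 1,\ -1<a<b-1,
\]
to products of one-dimensional integrals in $\lambda$ and in $\rho=|y|$. The hypothesis $\beta>d$ guarantees that $(1+|y|)^{1-\beta}$ is integrable on $\mathbb{R}^{d-1}$ (producing the factor $\asymp |v|^{-(1+\beta)}$ in the inner integral on $I_{\rm mid}$), while $q<2\beta-d$ ensures integrability of the large-$|x|$ tail. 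A direct bookkeeping then shows each of $I_-,I_{\rm mid},I_+$ is $\lesssim |v|^{q-\beta}$, with the dominant term $|v|\cdot|v|^q\cdot |v|^{-(1+\beta)}=|v|^{q-\beta}$ arising from the middle region.

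\textbf{Matching lower bound.} Restrict to the cylinder $\{|v|/4\leq\lambda\leq 3|v|/4,\ |y|\leq 1\}$, which has volume $\asymp |v|$. There $|x|\asymp|v|$, $|(x,v)|\asymp|v|$ and $\inf_s|x-sv|=|y|\leq 1$, so the integrand is $\asymp |v|^{q-(1+\beta)}$ and the integral is $\gtrsim |v|^{q-\beta}$.

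\textbf{Main obstacle.} The one nontrivial step is the upper-bound bookkeeping in the middle and side regions: the $y$-integral $\int_{\mathbb{R}^{d-1}}|y|^q(1+|y|)^{1-\beta}(|v|+|y|)^{-(1+\beta)}\dif y$ has different dominant scales depending on whether $q$ is above or below the auxiliary threshold $\beta-d$, and one must verify in each case that the final contribution does not exceed $|v|^{q-\beta}$. The inequalities $\beta>d$ and $\beta\geq d$ (so that $|v|^{d-2\beta}\leq |v|^{-\beta}$) are precisely what is needed to close these estimates.
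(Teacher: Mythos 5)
Your proposal is correct and uses essentially the same decomposition as the paper. The paper passes to the rotated frame via an orthogonal matrix $Q_v$ so that $v$ points along $\e_1$ and then splits according to $x_1<0$, $0\leq x_1\leq |v|$, $x_1>|v|$ (terms $H_\beta, K_\beta, L_\beta$); your decomposition $x=\lambda\hat v+y$ with the same three ranges of $\lambda$ is the same coordinate change. The subsequent reduction to products of one-dimensional polar integrals, and the role of the hypotheses $\beta>d$ (integrability of $(1+|y|)^{1-\beta}$ over $\mathbb{R}^{d-1}$) and $q<2\beta-d$ (tail integrability), are identical in spirit.

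Where you genuinely deviate, and in both cases you gain simplicity: (1) for the divergence \eqref{AG6}, the paper extracts $\infty$ from a careful two-dimensional lower bound on $L_\beta$ (the region $\lambda>|v|$), whereas you simply take $s=0$ in the infimum on $\{|x|\geq|v|\vee 1\}$ to get $\mathcal{N}_\beta(x,v)\gtrsim|x|^{-2\beta}$ and read off divergence from one radial integral — a cleaner argument; (2) for the lower bound in \eqref{Ineq-in-remark2}, the paper establishes the full two-sided asymptotic $K_\beta(v)\asymp(1+|v|)^{q-\beta}$ by explicit computation, whereas you restrict to a cylinder of volume $\asymp|v|$ on which all three factors of the integrand are constant in order of magnitude — again shorter. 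The price you pay is that you leave the upper-bound bookkeeping for $I_-,I_{\rm mid},I_+$ at a sketch level (you flag this yourself); the case splitting at the auxiliary threshold $q\lessgtr\beta-d$ in the $\rho$-integral, and the borderline $q=\beta-d$ where a $\log|v|$ appears, do need to be written out, but nothing fails: $\beta>d$ converts $|v|^{q+d-2\beta}$ into something $\leq|v|^{q-\beta}$, and $q=\beta-d>0$ absorbs the log. One small point worth a sentence in a written-up version: for $d=1$ the transverse space $\mathbb{R}^{d-1}$ is a point, so the "polar coordinate" step degenerates and the $y$-integral is trivial; this is consistent with your argument but should be mentioned.
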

	
	\begin{remark}\rm
		By \eqref{Ineq-in-remark2} and \eqref{TSE3}, we immediately obtain that for \(q_x, q_v \in [0, \alpha)\) with \(q_x + q_v < \alpha\),
		\begin{align*}
			&\int_{\mathbb{R}^{2d}} |x|^{q_x} |v|^{q_v} |\nabla^{j_x}_x \nabla^{j_v}_v p_t^\nu(x, v)| \,\mathrm{d}x \,\mathrm{d}v \lesssim_C t^{\frac{(q_x - j_x)(1 + \alpha) + (q_v - j_v)}{\alpha}}
			\int_{\mathbb{R}^{2d}} |x|^{q_x} |v|^{q_v} \mathcal{N}_{d + \alpha}(x, v) \,\mathrm{d}x \,\mathrm{d}v \\
			&\qquad \lesssim_C t^{\frac{(q_x - j_x)(1 + \alpha) + (q_v - j_v)}{\alpha}} 
			\int_{\mathbb{R}^{d}} |v|^{q_v} (1 + |v|)^{q_x - d - \alpha} \,\mathrm{d}v  \lesssim_C t^{\frac{(q_x - j_x)(1 + \alpha) + (q_v - j_v)}{\alpha}}.
		\end{align*}
		This is the key result proven in \cite[Lemma 2.5]{Ch-Zh4}.
	\end{remark}
	
	\begin{remark}\rm
		For any \(q \in [0, 2\alpha + d)\), by \eqref{DQ2}, \eqref{TSE1}, and \eqref{AG6}, we have the following asymptotic estimate for the conditional density:
		\[
		\mathbb{E} \left(|X_t|^q \mid V_t = v\right) \asymp t^q (t^{1/\alpha} + |v|)^q, \quad t > 0, \ v \in \mathbb{R}^d.
		\]
		Indeed, from the definition, along with \eqref{TSE1} and \eqref{Ineq-in-remark2}, we have
		\[
		\mathbb{E} \left(|X_t|^q \mid V_t = v\right) = \frac{\int_{\mathbb{R}^d} |x|^q p_t^\nu(x, v) \, \mathrm{d}x}{\int_{\mathbb{R}^d} p_t^\nu(x, v) \, \mathrm{d}x}
		\asymp \frac{\int_{\mathbb{R}^d} |x|^q \mathcal{N}_{d+\alpha}\left(\mT^\alpha_t z\right) \, \mathrm{d}x}{
			\int_{\mathbb{R}^d} \mathcal{N}_{d+\alpha}\left(\mT^\alpha_t z\right) \, \mathrm{d}x} \asymp t^q (t^{1/\alpha} + |v|)^q.
		\]
	\end{remark}
	
	\subsection{Related work}
	In recent years, the study of nonlocal kinetic operators, particularly regarding Harnack inequalities and heat kernel estimates, has garnered significant attentions. In \cite{HRZ24}, Hao, R\"ockner, and Zhang investigated the well-posedness of nonlinear and nonlocal kinetic Fokker-Planck equations with measure-valued initial values. The singular interaction kernels they considered include the Newtonian potential, Coulomb potential, and even the Riesz potential. 
	In \cite{AIN1, AIN2}, Auscher, Imbert, and Niebel developed a comprehensive theory for weak solutions to nonlocal kinetic equations with rough coefficients. Specifically, they established results on the regularity, existence, and uniqueness of weak solutions. While, Kassmann and Weidner \cite{KW2} constructed a counterexample demonstrating the failure of the Harnack inequality for the kinetic operator $v \cdot \nabla_x + \Delta^{\alpha/2}$, highlighting a key difference compared to local kinetic operators (see \cite{GIMA19}).
	On the other hand, for a class of nonlocal hypoelliptic equations in divergence form, Loher \cite{Lo24} proved a strong Harnack inequality and derived rough bounds for the fundamental solution. 
	
	After this paper was posted on arXiv, Grube informed us of his preprint \cite{Gru24}. Using the Fourier transform, he established the following two-sided estimate for the heat kernel of $v \cdot \nabla_x + \Delta^{\alpha/2}$ when $d = 1$ (see \cite[Theorem 1.1]{Gru24}):
	\begin{align}\label{DD1}
		p_1(x,v) \asymp \frac{1}{(1+|x|+|v|)^{2+\alpha}} \frac{1}{(1+(|2x-v|-|v|)_+)^\alpha},
	\end{align}
	where $a_+:=a\vee 0$ for $a\in\mR$.
	When $\alpha = 1$, Grube also derived an explicit formula for $p_1(x,v)$ (see \cite[Theorem 5.1]{Gru24}).
	
	To make a comparison, for $z = (x,v)$, let $\bar{x} := \frac{x}{|x|}$, $\bar{v} := \frac{v}{|v|}$, and $\omega_z := \langle \bar{x}, \bar{v} \rangle$. Noting that
	\[
	|x-v|^2 = |x-\langle x, \bar{v} \rangle \bar{v}|^2 + |\langle x, \bar{v} \rangle \bar{v} - v|^2 = 2|x|^2(1-\omega_z^2) + ||x|\omega_z - |v||^2,
	\]
	where we adopt the convention $\bar{0} := 0$, it follows that
	\[
	1 + |x|\sqrt{1 - \omega_z^2} + \left( \big|2|x| \omega_z - |v| \big| - |v| \right)_+ 
	\asymp
	\begin{cases}
		1 + |x|, & \omega_z \leq 0, \\
		1 + |x|\sqrt{1 - \omega_z^2}, & 0 < \omega_z \leq \frac{|v|}{|x|}, \\
		1 + |x - v|, & \frac{|v|}{|x|} < \omega_z \leq 1.
	\end{cases}
	\]
	Thus, by \eqref{AA6}, we obtain
	\begin{align}
		\mathcal{N}_\beta(z) \asymp \frac{1}{(1+|z|)^{1+\beta}} \left( 1 + |x|\sqrt{1 - \omega_z^2} + \left( \big|2|x| \omega_z - |v| \big| - |v| \right)_+ \right)^{1-\beta}.
	\end{align}
	Compared to \eqref{DD1}, there is an additional term $ |x|\sqrt{1 - \omega_z^2} $, which vanishes for $x,v \neq 0$ when $d = 1$. 
	In particular, when $d = 1$, our two-sided estimate $\mathcal{N}_{1+\alpha}(z)$ coincides with \eqref{DD1}.
	
	\subsection{Strategy of proof}
	Motivated by \cite{Wa} and following \cite{Ch-Zh2}, we apply L\'{e}vy-It\^o's decomposition to decompose \( Z_t \) into a small jump part and a large jump part:
	\[
	Z_t = Z_t^{(0)} + Z_t^{(1)},
	\]
	where 
	\begin{align*}
		Z_t^{(i)} := (X_t^{(i)}, V_t^{(i)}) = \left( \int_0^t L_s^{(i)} \, \mathrm{d}s, L_t^{(i)} \right), \quad i = 0, 1,
	\end{align*}
	and \( L_t^{(i)}, i=0,1 \) are two independent L\'{e}vy processes with characteristic exponent \( \psi_{  \nu^{(i)} } \), \(i=0,1\) (see \eqref{LE1}), and
	\[
	\nu^{(0)}(\mathrm{d}x) := \1_{\{|x| \leq 1\}} \frac{\kappa(x) \, \mathrm{d}x}{|x|^{d+\alpha}}, \quad  \nu^{(1)}(\mathrm{d}x) := \1_{\{|x| > 1\}} \frac{\kappa(x) \, \mathrm{d}x}{|x|^{d+\alpha}}.
	\]
	Under assumption \eqref{Ka1}, the small jump part \( Z_t^{(0)} \) has a smooth, strictly positive density \( q_t(z) \) that decays at any polynomial rate, which allows us to handle it more easily using Fourier transform techniques (see Lemmas \ref{lemma:small-jump} and \ref{lemma:small-jump-lowerbound}). In particular, since \( Z_t^{(0)} \) and \( Z_t^{(1)} \) are independent, one has
	\begin{align}\label{Identity}
		p^\nu_t(z) = \mathbb{E}\left( q_t(z - Z_t^{(1)}) \right).
	\end{align}
	Thus, the large jump part \( Z_t^{(1)} \) governs the decay rate in the two-sided estimate. We must point out that although \( L_t^{(1)} \) is a compound Poisson process (see \cite{Sato}), finding a suitable explicit function \( \mathcal{E}: (0,\infty)\times\mathbb{R}^{2d} \to (0, \infty) \) to obtain a sharp two-sided estimate of the form
	\[
	\mathbb{P}(|Z_t^{(1)} - z| \leq 1) \asymp \mathcal{E}(t,z),
	\]
	remains a challenge due to the degenerate nature of the process (see Lemma \ref{lemma2} below).
	
	In the next two sections, we provide complete proofs of Theorem \ref{thm1} and Lemma \ref{Le11}. Throughout this paper, by \( A \asymp B \), \( A \lesssim B \), and \( A \gtrsim B \), we mean that for some unimportant constant \( C \geq 1 \) that only depends on \( d,\alpha \),
	\[
	C^{-1} B \leq A \leq C B, \quad A \leq C B, \quad \text{and} \quad A \geq C^{-1} B.
	\]

	\section{Proof of Theorem \ref{thm1}}

	We first recall a well-known result. For reader's convenience, we provide detailed proofs.
	\begin{lemma}\label{lemma1}
		Let $L_t$ be a L\'{e}vy process with characteristic exponent $\psi$ and $Z_t$ be as in \eqref{DQ2}. Then for any $\xi, \eta\in \R^d$ and any $t\geq 0$,  it holds that 
		\begin{align}\label{Charac-of-Z}
			\E\left(\e^{\mathrm{i} (\xi, \eta)\cdot Z_t}\right)= \exp\left\{-\int_0^t \psi \left(s\xi +\eta\right)\mathrm{d}s\right\}.
		\end{align}
	\end{lemma}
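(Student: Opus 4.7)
The plan is to rewrite the linear combination $(\xi,\eta)\cdot Z_t = \xi \cdot \int_0^t L_s\,\dif s + \eta\cdot L_t$ as a single Wiener-type integral $\int_0^t f(s)\cdot \dif L_s$ against the L\'evy process for a deterministic integrand $f$, and then invoke the standard characteristic-function formula for such integrals.

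First I would apply the stochastic Fubini theorem (or, equivalently, deterministic integration by parts applied pathwise to the c\`adl\`ag process $L$) to rewrite
\begin{equation*}
\int_0^t L_s\,\dif s \;=\; \int_0^t\!\int_0^s \dif L_r\,\dif s \;=\; \int_0^t (t-r)\,\dif L_r.
\end{equation*}
Combining with $\eta\cdot L_t = \int_0^t \eta\cdot \dif L_s$ gives
\begin{equation*}
(\xi,\eta)\cdot Z_t \;=\; \int_0^t \bigl((t-s)\xi+\eta\bigr)\cdot \dif L_s .
\end{equation*}

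Next I would establish the auxiliary fact that, for any bounded Borel function $f:[0,t]\to\mR^d$,
\begin{equation*}
\E\exp\Bigl(\mathrm{i}\int_0^t f(s)\cdot \dif L_s\Bigr) = \exp\Bigl(-\int_0^t \psi(f(s))\,\dif s\Bigr).
\end{equation*}
For a step function $f=\sum_{k=1}^n c_k\1_{(t_{k-1},t_k]}$ this is immediate from the independence and stationarity of the increments of $L$ together with the L\'evy--Khintchine formula $\E\e^{\mathrm{i}\xi\cdot(L_{t_k}-L_{t_{k-1}})} = \e^{-(t_k-t_{k-1})\psi(\xi)}$. The case of general bounded Borel $f$ (in particular the continuous integrand $f(s)=(t-s)\xi+\eta$ that we need) follows by uniform approximation by step functions, using bounded convergence on the left-hand side and continuity of $\psi$ on the right.

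Applying this with $f(s)=(t-s)\xi+\eta$ yields
\begin{equation*}
\E\e^{\mathrm{i}(\xi,\eta)\cdot Z_t} \;=\; \exp\Bigl(-\int_0^t \psi\bigl((t-s)\xi+\eta\bigr)\,\dif s\Bigr),
\end{equation*}
and a change of variable $s\mapsto t-s$ produces exactly \eqref{Charac-of-Z}. There is no serious obstacle here; the only delicate point is the justification of the characteristic-function formula for deterministic integrands against a L\'evy process, which is a standard approximation argument.
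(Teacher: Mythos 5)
Your proof is correct, but it takes a genuinely different route than the paper's. You first perform a continuous integration by parts (stochastic Fubini) to rewrite $\int_0^t L_s\,\dif s=\int_0^t(t-r)\,\dif L_r$, thereby reducing the claim to the standard formula
\[
\E\exp\Bigl(\mathrm{i}\int_0^t f(s)\cdot \dif L_s\Bigr)=\exp\Bigl(-\int_0^t\psi(f(s))\,\dif s\Bigr)
\]
for deterministic integrands, which you then prove by step-function approximation. The paper instead avoids stochastic integration entirely: it discretizes $\int_0^t L_s\,\dif s$ directly as the Riemann sum $\tfrac{t}{n}\sum_{k=1}^n L_{kt/n}$, carries out a discrete Abel summation to express everything in terms of the independent increments $\Delta_k$, applies the L\'evy--Khintchine formula to the resulting finite product, and passes to the limit using a.s.\ Riemann integrability of the c\`adl\`ag path. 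The two proofs contain the same two ideas (a summation/integration by parts, followed by independence plus L\'evy--Khintchine), but the paper keeps them fused inside a single elementary discretization, while you separate them into a change-of-integrator step and a reusable lemma about Wiener-type integrals. Your version is more modular and would transfer verbatim to any deterministic integrand; the paper's is slightly more self-contained, since it needs only the L\'evy--Khintchine formula for increments and no semimartingale calculus. One small point to be careful about if you were to flesh this out: the stochastic integral $\int_0^t f(s)\cdot\dif L_s$ for a general L\'evy process need not live in $L^2$, so the step-function approximation should be justified via convergence in probability (together with bounded convergence for the characteristic functions and local boundedness of $\psi$ for the right-hand side), rather than via an $L^2$ isometry.
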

	\begin{proof} 
		Given $t>0$, for each $n\in \N$ and $k=1,\cdots,n$, 
		define $\Delta_k:= L_{kt/n}-L_{(k-1)t/n}$ and 
		\[
		{Z}_t^{( n)}:= \left(\frac{t}{n}\sum_{k=1}^n L_{kt/n}, L_t\right) = \left(\frac{t}{n}\sum_{k=1}^n \sum_{j=1}^k \Delta_j, \sum_{k=1}^n \Delta_k\right).
		\]
		Then for each $t>0$ and $n\in \N$,
		\begin{align*}
			\E\left(\e^{\mathrm{i} (\xi, \eta)\cdot{Z}_t^{( n)}}\right)& 
= \E\left(\exp\left\{\mathrm{i}\left( \frac{t}{n}\sum_{k=1}^n \sum_{j=1}^k  \xi\cdot \Delta_j + \sum_{k=1}^n \eta \cdot \Delta_k \right)\right\}\right) \nonumber\\
			& =  \E\left(\exp\left\{\mathrm{i}\left( \sum_{k=1}^n  \left(  \frac{(n-k+1)t\xi}{n}+ \eta \right)\cdot \Delta_k \right)  \right\}\right) \nonumber\\
			& = \exp\left\{- \frac{t}{n}\sum_{k=1}^n \psi\left( \frac{(n-k+1)t\xi}{n}+ \eta \right) \right\},
		\end{align*}
		where in the last equality we used the fact that $\{\Delta_k, 1\leq k\leq n\}$ is independent and the definition \eqref{LE1} of $\psi$.  Since $L_t$ is right-continuous with left hand limit almost surely, we see that the path of $L$ is almost surely Riemann integrable. Therefore,
		\begin{align*}
			&\E\left(\e^{\mathrm{i} (\xi, \eta)\cdot Z_t}\right) = \lim_{n\to\infty} \E\left(\e^{\mathrm{i} (\xi, \eta)\cdot {Z}_t^{( n)}}\right) \nonumber\\
			&  = \lim_{n\to\infty} \exp\left\{- \frac{t}{n}\sum_{k=1}^n \psi\left(\frac{(n-k+1)t\xi}{n}+ \eta \right) \right\}= \exp\left\{-\int_0^t \psi \left((t-s)\xi+\eta\right)\mathrm{d}s\right\},
		\end{align*}
		which implies \eqref{Charac-of-Z}. 
	\end{proof} 
	
	For \( \lambda, t > 0 \), we have the following scaling property:
	\begin{align} \label{Scaling}
		p^{\lambda\nu}_{t}(x,v) = \lambda^d p^\nu_{\lambda t}(\lambda x,v) = \lambda^d(\lambda t)^{-\frac{2d}{\alpha} - d} p^{\nu_{\lambda t}}_1(\mT^\alpha_{\lambda t} (\lambda x,v)),
	\end{align}
	where \( \mT^\alpha_t \) is defined by \eqref{TT1} and
	\[
	\nu_{\lambda t}(\dif x) = \frac{\kappa((\lambda t)^{1/\alpha}x)}{|x|^{d+\alpha}} \dif x.
	\]
	Indeed, by Fourier's transform and the change of variable, we have
	\begin{align*}
		\int_{\mR^{2d}} \e^{\mathrm{i} (\xi, \eta) \cdot z} p^{\lambda\nu}_{t}(z) \, \dif z
		&= \exp\left\{-\lambda \int_0^t \psi \left(s\xi + \eta\right)\, \mathrm{d}s\right\}
		= \exp\left\{-\int_0^{\lambda t} \psi \left(s\xi/\lambda + \eta\right)\, \mathrm{d}s\right\}\\
		&= \int_{\mR^{2d}} \e^{\mathrm{i} (\xi/\lambda, \eta) \cdot z} p^{\nu}_{\lambda t}(z) \, \dif z
		= \lambda^d \int_{\mR^{2d}} \e^{\mathrm{i} (\xi \cdot x + \eta \cdot v)} p^{\nu}_{\lambda t}(\lambda x,v)\, \dif x \, \dif v,
	\end{align*}
	which gives the first equality. The second equality follows from \eqref{MC1} (see \cite{Ch-Zh4}). 
	
	Therefore, by choosing \( \lambda = \kappa_0 \), we shall assume
	\begin{align}\label{Con3}
		1 \leq \kappa(x) \leq \kappa_1,\ \ \int_{\{|\omega|=r\}} \omega \kappa(\omega) \, \dif \omega = 0,\ \ \forall r > 0.
	\end{align}
	Under \eqref{Con3}, to prove Theorem \ref{thm1}, it suffices to consider \( p^{\nu_t}_1 \). By \eqref{Identity}, we will separately estimate \( q_1 \) and \( Z_1^{(1)} \). 
	We begin by demonstrating the following regularity estimates, which state that the small jump part possesses a smooth density that decays at any polynomial rate.
	
	\begin{lemma}\label{lemma:small-jump}
		Under \eqref{Con3}, for any \( j \in \mN_0 \) and \( \beta \geq 0 \), there exists a constant \( C = C(j,\beta,d,\alpha) > 0 \) such that for all \( z \in \R^{2d} \),
		\[
		|\nabla^j q_1(z)| \lesssim_C \kappa^\beta_1 (1 + |z|)^{-\beta}.
		\]
	\end{lemma}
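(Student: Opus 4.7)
The plan is to prove the estimate by Fourier inversion, writing $q_1$ via Lemma \ref{lemma1} as
\[
q_1(z) = (2\pi)^{-2d}\int_{\R^{2d}} \e^{-\mathrm{i}(\xi,\eta)\cdot z}\,\hat q_1(\xi,\eta)\,\dif\xi\,\dif\eta, \qquad \hat q_1(\xi,\eta) := \exp\!\Bigl(-\!\int_0^1 \psi^{(0)}(s\xi+\eta)\dif s\Bigr),
\]
where $\psi^{(0)}$ is the characteristic exponent of $\nu^{(0)}$. The polynomial decay of $q_1$ and its derivatives will be extracted by integration by parts in Fourier variables, which trades powers of $z$ for derivatives of $\hat q_1$. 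Concretely, for any multi-indices $\sigma,\gamma$,
\[
z^\gamma \nabla^\sigma q_1(z)=C_{\gamma,\sigma}\!\int_{\R^{2d}} \e^{-\mathrm{i}(\xi,\eta)\cdot z}\,\p^\gamma_{(\xi,\eta)}\!\bigl[(\mathrm{i}\xi,\mathrm{i}\eta)^\sigma \hat q_1(\xi,\eta)\bigr]\dif\xi\,\dif\eta,
\]
and it suffices to show that the integrand on the right is absolutely integrable with a constant of the form $C\kappa_1^{|\gamma|}$.

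The two ingredients I need for this are estimates on $\psi^{(0)}$ and a kinetic coercivity bound. First, since $\kappa\geq 1$ by \eqref{Con3}, a standard rescaling of
\[
\mathrm{Re}\,\psi^{(0)}(\xi)=\int_{|x|\leq 1}(1-\cos(\xi\cdot x))\frac{\kappa(x)}{|x|^{d+\alpha}}\,\dif x
\]
yields $\mathrm{Re}\,\psi^{(0)}(\xi)\gtrsim |\xi|^\alpha\wedge |\xi|^2$, which on $|\xi|\geq 1$ gives $\gtrsim |\xi|^\alpha$. For the derivatives, direct differentiation shows that for $k\geq 2$,
\[
|\nabla^k\psi^{(0)}(\xi)|\leq \kappa_1\int_{|x|\leq 1}\frac{|x|^k}{|x|^{d+\alpha}}\,\dif x\lesssim \kappa_1,
\]
while the $k=1$ case is handled by the compensator. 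Second, I will establish the kinetic-type coercivity
\[
\int_0^1 \mathrm{Re}\,\psi^{(0)}(s\xi+\eta)\,\dif s \gtrsim 1\wedge(|\xi|^\alpha+|\eta|^\alpha),
\]
which follows from the classical inequality $\int_0^1|s\xi+\eta|^\alpha\,\dif s \gtrsim_\alpha |\xi|^\alpha+|\eta|^\alpha$ (the segment $\{s\xi+\eta:s\in[0,1]\}$ can vanish at only one point, and the $L^\alpha$-norm on $[0,1]$ of the affine function $s\mapsto |s\xi+\eta|$ is controlled from below by its endpoint values) together with the pointwise bound on $\mathrm{Re}\,\psi^{(0)}$.

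Once these two ingredients are in place, I apply Faà di Bruno to $\p^\gamma_{(\xi,\eta)}\hat q_1 = \hat q_1 \cdot P_\gamma(\xi,\eta)$, where $P_\gamma$ is a polynomial expression in derivatives of $\int_0^1\psi^{(0)}(s\xi+\eta)\dif s$. Each such derivative carries a factor of $\kappa_1$ and grows at most polynomially in $(\xi,\eta)$, so $|P_\gamma(\xi,\eta)|\lesssim \kappa_1^{|\gamma|}(1+|\xi|+|\eta|)^{M}$ for some $M=M(\gamma)$. Combined with the Gaussian-type decay $|\hat q_1(\xi,\eta)|\lesssim \exp(-c(|\xi|^\alpha+|\eta|^\alpha))$ on $|\xi|+|\eta|\geq 1$, the resulting integrand is absolutely integrable, yielding
\[
|z^\gamma \nabla^\sigma q_1(z)|\lesssim \kappa_1^{|\gamma|},
\]
and choosing $|\gamma|=\beta$ and combining with the trivial bound on $\{|z|\leq 1\}$ gives the claimed estimate $|\nabla^\sigma q_1(z)|\lesssim \kappa_1^\beta(1+|z|)^{-\beta}$.

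The main obstacle will be the kinetic coercivity bound: because $s\xi+\eta$ may vanish on the interval $[0,1]$ when $\xi$ and $\eta$ point in opposite directions with compatible magnitudes, one cannot simply integrate a lower bound pointwise, and the proof must exploit that such cancellation is confined to a small subinterval. A secondary bookkeeping challenge is tracking the power of $\kappa_1$ through the Faà di Bruno expansion so that $|\gamma|$ derivatives produce no more than $\kappa_1^{|\gamma|}$.
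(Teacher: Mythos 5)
Your approach is essentially the same as the paper's: Fourier inversion, integration by parts in the frequency variables, and the two key estimates on $\phi(w)=\int_0^1\psi_{\nu^{(0)}}(s\xi+\eta)\,\dif s$ — a coercivity bound for $\mathrm{Re}\,\phi$ and a $\kappa_1$-linear bound on $\nabla^j\phi$ — which the paper simply cites from \cite[(2.37)--(2.38)]{Ch-Zh4} and you re-derive in-line. Two small points to tidy: your stated coercivity $\int_0^1\mathrm{Re}\,\psi^{(0)}(s\xi+\eta)\,\dif s\gtrsim 1\wedge(|\xi|^\alpha+|\eta|^\alpha)$ overstates the bound in the regime $|\xi|+|\eta|\leq 1$, where the correct order is $|\xi|^2+|\eta|^2$ (which is \emph{smaller} than $|\xi|^\alpha+|\eta|^\alpha$ when $\alpha<2$) — harmless here, since only the large-frequency regime controls integrability, but the statement as written is false; and "choosing $|\gamma|=\beta$" tacitly assumes $\beta$ is an integer, so for general $\beta\geq 0$ you need the interpolation step the paper invokes (take $m=\lfloor\beta\rfloor$ and $m+1$, interpolate $|\nabla^j q_1(z)|\leq A_m^{1-\theta}A_{m+1}^\theta(1+|z|)^{-\beta}$ with $\theta=\beta-m$ to recover $\kappa_1^\beta$).
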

	\begin{proof}
		Let \( \psi_{ \nu^{(0)}} \) be the characteristic exponent of \( L^{(0)}_t \) defined as in \eqref{LE1}. 
		Then, it is easily seen that under \eqref{Con3}, 
		\begin{align} \label{MC2}
			\mbox{Re}(\psi_{ \nu^{(0)}}(\xi)) \geq \int_{\mathbb{R}^{d} \setminus \{0\}} \left(1 - \cos(\xi \cdot x)\right) \frac{\1_{\{|x| \leq 1\}} \, \dif x}{|x|^{d + \alpha}}.
		\end{align}
		For simplicity of notation, we write
		\[
		w = (\xi, \eta) \in \mR^{2d}, \quad \phi(w) := \int_0^1 \psi_{ \nu^{(0)}}(s\xi + \eta) \, \dif s.
		\]
		By \eqref{MC2} and \cite[(2.37) and (2.38)]{Ch-Zh4}, for any \( j \in \mN_0 \), there exist constants \( C, c > 0 \) only depending on \( d, \alpha \) such that for all \( w \in \R^{2d} \),
		\begin{align} \label{eq20}
			\mbox{Re}(\phi(w)) \geq c \left( |w|^2 \land |w|^\alpha \right), \quad |\nabla^j \phi(w)| \lesssim_C \kappa_1 \left( |w|^j + 1 \right).
		\end{align}
		By Fourier's inverse transform and Lemma \ref{lemma1}, we have
		\[
		q_1(z) = (2\pi)^{-2d} \int_{\R^{2d}} \e^{\mathrm{i} w \cdot z} \e^{-\int_0^1 \psi_{ \nu^{(0)}}(s\xi + \eta) \, \dif s} \, \dif w = (2\pi)^{-2d} \int_{\R^{2d}} \e^{\mathrm{i} w \cdot z} \e^{-\phi(w)} \, \dif w.
		\]
		Therefore, for any \( w = (w_1, \dots, w_{2d}) \), \( z = (z_1, \dots, z_{2d}) \in \R^{2d} \) and
		\( j_1, \dots, j_{2d} \in \N_0 \) with \( \sum_{i=1}^{2d} j_i = j \), we have
		\[
		(1 + |z|^2)^m \nabla^{j_1}_{z_1} \cdots \nabla^{j_{2d}}_{z_{2d}} q_1(z) = (2\pi)^{-2d}  {\rm i}^j  \int_{\R^{2d}}
		\e^{\mathrm{i} w \cdot z} \prod_{i=1}^{2d} w_i^{j_i} \times \left( (1 - \Delta_w)^m \e^{-\phi(w)} \right) \, \dif w,
		\]
		where \( \Delta_w \) stands for the Laplacian and \( m \in \mN_0 \).
		By \eqref{eq20}, \( |\e^{-\phi(w)}| = \e^{-\mbox{Re}(\phi(w))} \), and the elementary chain rule, we get for some \( C = C(m, j, d, \alpha) > 0 \),
		\[
		(1 + |z|^2)^m \left| \nabla^{j_1}_{z_1} \cdots \nabla^{j_{2d}}_{z_{2d}} q_1(z) \right| \leq (2\pi)^{-2d} \int_{\R^{2d}} |w|^j \left| (1 - \Delta_w)^m \e^{-\phi(w)} \right| \, \dif w \leq C \kappa_1^{2m}.
		\]
		By interpolation, we obtain the desired estimate.
	\end{proof}
	
	Next, we focus on establishing the crucial two-sided sharp estimate for the large jump entering any cube that is far from the original point.
	For \( z = (x, v) \in \mathbb{R}^{2d} \) and \( r > 0 \), we define a cube in \( \mathbb{R}^{2d} \) and a ball in \( \mathbb{R}^d \) as follows:
	\begin{align} \label{Def-of-Q}
		Q_r(z) := \{(x', v') \in \R^{2d}: |x - x'| \leq r, \; |v - v'| \leq r\}, \quad B_r := \{x \in \mathbb{R}^d : |x| \leq r\}.
	\end{align}
	
	\begin{lemma} \label{lemma2}
		Under \eqref{Con3}, there exist two constants \( C_i = C_i(d, \alpha)>0, \, i = 0, 1 \) such that  
		\begin{align*}
			\P\left(Z_1^{(1)} \in Q_1(z)\right) \lesssim_{C_1} \frac{ \kappa_1^{3+2d+2\alpha}}{(|z| + 1)^{d + \alpha}} \int_0^1 \frac{\dif s}{(|\Gamma_s z| + 1)^{d + \alpha}}, \quad z \in \R^{2d},
		\end{align*}
		where \( \Gamma_s z := x - sv \) for \( z = (x, v) \), and for \( c_0 := \int_{|y| \geq 1} \frac{\dif y}{|y|^{d + \alpha}} \),
		\begin{align*}
			\P(Z_1^{(1)} \in Q_1(z)) \gtrsim_{C_0} \frac{\e^{-c_0 \kappa_1}}{(|z| + 1)^{d + \alpha}} \int_0^1 \left( |\Gamma_s z| + 1 \right)^{-d - \alpha} \, \dif s, \quad z \in \R^{2d}.
		\end{align*}
	\end{lemma}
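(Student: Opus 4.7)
The process $L^{(1)}$ is compound Poisson because $\nu^{(1)}$ is finite with total mass $\lambda^{(1)}:=\nu^{(1)}(\R^d)\in[c_0,c_0\kappa_1]$. Writing $L^{(1)}_t=\sum_{k=1}^{N^{(1)}_t}\xi_k$, with $N^{(1)}_t$ Poisson of rate $\lambda^{(1)}$ independent of the i.i.d.\ jumps $\xi_k\sim \lambda^{(1)-1}\nu^{(1)}$, and using that on $\{N^{(1)}_1=n\}$ the ordered jump times form order statistics of $n$ uniforms on $[0,1]$, one has
\[
V^{(1)}_1=\sum_{i=1}^{n}\xi_i,\qquad X^{(1)}_1=\sum_{i=1}^{n}(1-\tau_i)\xi_i.
\]
I would decompose $\P(Z^{(1)}_1\in Q_1(z))=\sum_{n\geq 0}\P(N^{(1)}_1=n,\,Z^{(1)}_1\in Q_1(z))$ and extract both bounds from this decomposition.

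\textbf{Lower bound.} I would focus on the single-jump sector $\{N^{(1)}_1=1\}$, of probability $\lambda^{(1)}\e^{-\lambda^{(1)}}\gtrsim \e^{-c_0\kappa_1}$. With $s:=1-\tau$ and $\zeta:=\xi-v$, the event $Z^{(1)}_1\in Q_1(z)$ becomes $|\zeta|\leq 1$, $|s\zeta-\Gamma_s z|\leq 1$, together with the side-condition $|v+\zeta|>1$. On this region $|\xi|\leq 1+|v|\lesssim 1+|z|$, so $\kappa(\xi)/|\xi|^{d+\alpha}\gtrsim(1+|z|)^{-d-\alpha}$, and a direct $(\zeta,s)$-volume computation yields the target $\int_0^1(1+|\Gamma_s z|)^{-d-\alpha}\,\dif s$. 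This argument succeeds as long as the side-condition $|\xi|>1$ does not void the integration region, effectively when $|v|\gtrsim 1$. In the complementary regime $|v|\lesssim 1$ — where $1+|z|\asymp 1+|\Gamma_s z|\asymp 1+|x|$ and the target reduces to $\gtrsim(1+|x|)^{-2(d+\alpha)}$ — I would instead use $\{N^{(1)}_1=2\}$: taking $\xi_1=\eta$ pointing along $x$ of free magnitude and $\xi_2=-\eta+\varepsilon$ with $|\varepsilon|\leq 1$, the $V$-constraint becomes automatic, while $(1-\tau_1)\xi_1+(1-\tau_2)\xi_2=(\tau_2-\tau_1)\eta+(1-\tau_2)\varepsilon$. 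The substitution $y:=(\tau_2-\tau_1)\eta$ turns the $X$-constraint into $|y-x|\lesssim 1$, and integrating $|\eta|^{-2(d+\alpha)}$ in $(\tau_1,\tau_2,y,\varepsilon)$ produces exactly $(1+|x|)^{-2(d+\alpha)}$.

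\textbf{Upper bound.} The $n=0$ piece equals $\e^{-\lambda^{(1)}}\1_{|z|\leq\sqrt{2}}$, easily absorbed into the claim. For $n=1$, mirroring the single-jump analysis with $\kappa\leq\kappa_1$ yields exactly the claimed $\lesssim \kappa_1(1+|z|)^{-d-\alpha}\int_0^1(1+|\Gamma_s z|)^{-d-\alpha}\dif s$. For $n\geq 2$, I would use the two constraints to solve for two distinguished jumps $(\xi_{i_*},\xi_{j_*})$ in terms of the others: the linear system has determinant $\tau_{i_*}-\tau_{j_*}$, yielding a Jacobian $|\tau_{i_*}-\tau_{j_*}|^{-d}$. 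Critically, the same constraints force $|\xi_{i_*}|\cdot|\tau_{i_*}-\tau_{j_*}|\asymp 1+|\Gamma_s z|$ (for the appropriate $s$), so that $|\xi_{i_*}|^{-d-\alpha}|\xi_{j_*}|^{-d-\alpha}$ contributes a compensating factor $|\tau_{i_*}-\tau_{j_*}|^{2(d+\alpha)}$; net of the Jacobian, the resulting $|\tau_{i_*}-\tau_{j_*}|^{d+2\alpha}$ is integrable in time and reproduces the target $(1+|z|)^{-d-\alpha}\int_0^1(1+|\Gamma_s z|)^{-d-\alpha}\dif s$. The remaining $n-2$ jumps integrate freely against $\nu^{(1)}$, each contributing at most $\lambda^{(1)}\leq c_0\kappa_1$, and summing the resulting series over $n$ against the Poisson weights $\e^{-\lambda^{(1)}}/n!$ yields the claimed polynomial prefactor $\kappa_1^{3+2d+2\alpha}$.

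\textbf{Main obstacle.} The principal technical difficulty is the $n\geq 2$ upper-bound analysis: controlling the Jacobian singularity $|\tau_{i_*}-\tau_{j_*}|^{-d}$ against the jump-size decay $|\xi_{i_*}|^{-d-\alpha}|\xi_{j_*}|^{-d-\alpha}$, and verifying that — after the compensation outlined above — the per-$n$ bound is uniformly of the target order, with the $\kappa_1^{3+2d+2\alpha}$ prefactor emerging from the bookkeeping of sup-bounds on $\kappa$ together with the Poisson-weighted series in $n$. A secondary, conceptually cleaner but still nontrivial issue is the $|v|$-dependent case split in the lower bound: one must verify that the separately proven one- and two-jump estimates dovetail into the single expression $(1+|z|)^{-d-\alpha}\int_0^1(1+|\Gamma_s z|)^{-d-\alpha}\dif s$ uniformly in $v$.
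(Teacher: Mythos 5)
Your overall strategy — decompose by the number $n$ of large jumps and extract the estimate from a few distinguished jump sectors — is the same as the paper's, but both halves of the argument have genuine gaps as proposed.

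\textbf{Lower bound.} The one-jump sector cannot prove the lower bound in the regime you claim. With $s=1-\tau$ and $\zeta=\xi-v$, the constraint $|s\zeta-\Gamma_s z|\leq 1$ with $|\zeta|\leq 1$ implies $|\Gamma_s z|\leq 1+s|\zeta|\leq 2$; hence the $(\zeta,s)$-volume is \emph{identically zero} whenever $\inf_{s\in[0,1]}|\Gamma_s z|>2$, irrespective of the size of $|v|$. This is not a marginal case: it covers all $z=(x,v)$ for which $x$ sits more than distance $2$ from the segment $\{sv: s\in[0,1]\}$, and such $z$ exist with $|v|$ arbitrarily large. Your $n=2$ fallback is tailored to $|v|\lesssim 1$: you set $\xi_1+\xi_2=\varepsilon$ with $|\varepsilon|\leq 1$, so the velocity constraint forces $|v|\leq 2$ and the construction does not extend. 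The paper instead works with the two-jump sector $I_2$ \emph{for all $z$}: one jump $y_2$ is steered so that $y_2(s_2-s_1)\approx \Gamma_{s_1}z$ (producing the factor $(1+|\Gamma_{s_1}z|)^{-d-\alpha}$ after integration against $\mu$), and the other is fixed by $y_1\approx v-y_2$ (producing $(1+|z|)^{-d-\alpha}$, since $|v-y_2|\lesssim |z|$). This decouples the roles of the two jumps and works uniformly in $v$; it is the missing ingredient in your lower bound.

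\textbf{Upper bound.} Your Jacobian sketch has a hidden dependence you cannot ignore: after solving the linear system for $(\xi_{i_*},\xi_{j_*})$, their values depend not only on $x,v$ but also on all the other $n-2$ jumps, which are supported on the unbounded set $\{|y|>1\}$. So the claim that ``the constraints force $|\xi_{i_*}|\,|\tau_{i_*}-\tau_{j_*}|\asymp 1+|\Gamma_s z|$'' is simply false without extra control, and the compensating factor $|\tau_{i_*}-\tau_{j_*}|^{2(d+\alpha)}$ you invoke does not materialize in general. The paper avoids solving a linear system at all. After a symmetrization that rewrites $I_n(z)$ as an integral of $\1\{\sum_j y_j s_j \in x+B_1\}\1\{\sum_j y_j\in v+B_1\}$ over $[0,1]^n\times\R^{nd}$, it argues configuration-by-configuration: (i) from $|x+v|\vee|x-v|\geq|z|$ and the triangle inequality, \emph{some} $y_j$ has $|y_j|\gtrsim |z|/n$, and replacing $\mu(\dif y_j)\leq\kappa_1\dif y_j/(\lambda|y_j|^{d+\alpha})$ by Lebesgue gives one factor $(1+|z|)^{-d-\alpha}$; (ii) conditioning on $s_1$ and the residual constraint $\sum_{j\geq 2}y_j(s_j-s_1)\in\Gamma_{s_1}z+B_2$, some $y_j$ ($j\geq 2$) has $|y_j||s_j-s_1|\gtrsim|\Gamma_{s_1}z|/n$, and the same density swap gives the second factor $(1+|\Gamma_{s_1}z|)^{-d-\alpha}$ after the Jacobian $|s_j-s_1|^{-d}$ from integrating out $y_j$. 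The union bound over which jump is distinguished contributes the polynomial $n^{2+2d+2\alpha}$, which is then summed against the Poisson weights $\lambda^{n-1}e^{-\lambda}/n!$ to produce the $\kappa_1^{3+2d+2\alpha}$ prefactor. Your sketch has the right flavor (two distinguished jumps, one per constraint, a time-Jacobian, and a Poisson series), but as written it is not a proof; to close it you would need to make $(i_*,j_*)$ data-dependent and to control the contribution of the remaining jumps, which is exactly what the paper's ``largest-jump union bound'' does more directly.

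Finally, two minor remarks on your Poisson decomposition: the paper's series for $\P(Z_1^{(1)}\in Q_1(z))$ is obtained after a change of variables and symmetrization under permutations of $(s_1,\dots,s_n)$, which makes the integrand symmetric and introduces the clean $1/n!$ factor; this simplification would also help your argument. And note that the exponent in the paper's lower-bound constant is $e^{-c_0\kappa_1}$, which matches the worst case $\lambda=c_0\kappa_1$ and is already built in to the uniform $I_2$ argument, so no case split on $|v|$ is needed anywhere.
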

	\begin{proof}
		In the following proof, all constants depend only on $d$ and $\alpha$. We fix a point \( z = (x,v) \in \mathbb{R}^{2d} \) and divide the proof into three steps.
		
		\textbf{(Step 1).} In this step we present a series representation for $\P(Z_1^{(1)}\in  Q_1(z))$.
		Let \( \{\tau_n, n \in \mathbb{N}\} \) and \( \{\xi_n, n \in \mathbb{N}\} \) be two independent families of i.i.d. random variables in \( [0, \infty) \) and \( \mathbb{R}^d \) with exponential distributions of parameter \( \lambda :=  \nu^{(1)}(\mathbb{R}^d) \) and \( \mu := \frac{ \nu^{(1)}}{\lambda} \), respectively. We set \( S_0^\tau = S_0^\xi = 0 \) and define 
		\[
		S_n^\tau := \tau_1 + \cdots + \tau_n, \quad S_n^\xi := \xi_1 + \cdots + \xi_n, \quad n \geq 1.
		\]
		Now, define 
		\[
		N_t := \max \{n: S_n^\tau \leq t\}, \quad H_t := S_{N_t}^\xi, \quad t > 0.
		\]
		Then, \( H_t \) is a compound Poisson process with L\'{e}vy measure \(  \nu^{(1)} \), which implies that 
		\[ 
		(H_t)_{t\geq 0} \stackrel{\mathrm{d}}{=} (L_t^{(1)})_{t\geq 0}.
		\] 
		Noticing that for any \( k \in \mathbb{N} \) and \( s > 0 \), 
		\[
		\{N_s = k\} = \{S_k^\tau \leq s < S_{k+1}^\tau\},
		\]
		we have
		\[
		\int_0^1 H_s \, \mathrm{d}s = \sum_{k=1}^\infty S_k^\xi \int_0^1 \1_{\{N_s = k\}} \, \mathrm{d}s = \sum_{k=1}^\infty S_k^\xi \left( \tau_{k+1} \1_{\{S_{k+1}^\tau \leq 1\}} + (1 - S_k^\tau) \1_{\{S_k^\tau \leq 1 < S_{k+1}^\tau\}} \right).
		\]
		Hence, for fixed $z\in\mR^{2d}$,
		recalling the definition of \( B_1 \) in \eqref{Def-of-Q}, we have the following series expression for the probability of \( Z^{(1)}_1 \) lying in \( Q_1(z) \):
		\begin{align*}
			&\P(Z_1^{(1)} \in Q_1(z)) = \P\left( \int_0^1 H_s \, \mathrm{d}s \in x + B_1, H_1 \in v + B_1 \right) \nonumber \\
			&= \sum_{n=1}^\infty \P\left( \sum_{k=1}^{n-1} S_k^\xi \tau_{k+1} + S_n^\xi(1 - S_n^\tau) \in x + B_1, S_n^\xi \in v + B_1, S_n^\tau \leq 1 < S_{n+1}^\tau \right)
			=:\sum_{n=1}^\infty I_n(z).
		\end{align*}
		
		Below, for the simplicity of notation, we write for  $n\in\mN$,
		$$
		\bs=(s_1,\cdots,s_n),\ \by=(y_1,\cdots,y_n),\ \Lambda_n:=\{\bs\in[0,1]^n: s_1+\cdots+s_n\leq 1\},
		$$
		and
		\begin{align}\label{eq108}
			\mathrm{d}\bs:=
			\mathrm{d}s_1\cdots \mathrm{d}s_{n},\ \   \bmu(\dif \by):=\mu(\dif y_1)\cdots\mu(\dif y_n)\in{\mathcal P}(\R^{nd}).
		\end{align}
		Since $\tau_k\sim \lambda\e^{-\lambda s}\1_{s\geq 0}$ and $\xi_k\sim\mu$ are independent,
		by elementary calculus, we have
		\begin{align}\label{eq18}
			I_n(z)&= \lambda^{n+1}\int_{[0,\infty)^{n+1} } \int_{\R^{nd}} \1_{\left\{\sum_{k=1}^{n-1}\sum_{j=1}^k y_j s_{k+1} + \sum_{j=1}^n y_j(1-s_1-\cdots -s_n)\in x+B_1\right\}}\1_{\left\{\sum_{j=1}^n y_j \in v+B_1\right\}} \nonumber\\
			&\qquad \times \1_{\left\{s_1+\cdots +s_n\leq 1<s_1+\cdots +s_{n+1} \right\}} \e^{-\lambda (s_1+\cdots +s_{n+1})} \mu(\mathrm{d}y_1)\cdots \mu(\mathrm{d}y_n)\dif s_1\cdots\dif s_n\dif s_{n+1}\nonumber\\
			& =  \lambda^{n}\e^{-\lambda}\int_{\Lambda_n} \int_{\R^{nd}} \1_{\left\{\sum_{k=1}^{n-1}\sum_{j=1}^k y_j s_{k+1} + \sum_{j=1}^n y_j(1-s_1-\cdots -s_n)\in x+B_1\right\}}\1_{\left\{\sum_{j=1}^n y_j \in v+B_1\right\}}  \bmu(\dif\by)\dif  \bs\nonumber\\
			&=  \lambda^{n}\e^{-\lambda}\int_{\Lambda_n} \int_{\R^{nd}} \1_{\left\{\sum_{k=1}^{n} y_k (1-\sum_{j=1}^k s_k) \in x+B_1\right\}}\1_{\left\{\sum_{j=1}^n y_j \in v+B_1\right\}} \bmu(\dif\by)\dif \bs.
		\end{align}
		Next, by the change of variable 
		$$
		(1-s_1,1-s_1-s_2,\cdots,1-s_1-\cdots-s_n)\to (s_1,s_2,\cdots,s_n),
		$$ 
		we further have
		$$
		I_n(z)= \lambda^{n}\e^{-\lambda}\int_{0\leq s_n\leq\cdots \leq s_1\leq 1} \int_{\R^{nd}} \1_{\left\{\sum_{j=1}^{n} y_j s_j \in x+B_1\right\}}\1_{\left\{\sum_{j=1}^n y_j \in v+B_1\right\}}  \bmu(\dif\by) \dif \bs.
		$$
		Since the multi-function
		$$
		(s_1,\cdots,s_n)\mapsto \int_{\R^{nd}} \1_{\left\{\sum_{j=1}^{n} y_j s_j \in x+B_1\right\}}\1_{\left\{\sum_{j=1}^n y_j \in v+B_1\right\}} \bmu(\dif\by)
		$$
		is symmetric under permutations, we get
		\begin{align}\label{eq118}
			I_n(z)= \frac{\lambda^n \e^{-\lambda}}{n!}\int_{[0,1]^n}\int_{\R^{nd}} \1_{\left\{\sum_{j=1}^{n} y_j s_j \in x+B_1\right\}}\1_{\left\{\sum_{j=1}^n y_j \in v+B_1\right\}}  \bmu(\dif\by)\mathrm{d}\bs.
		\end{align}
		
		\textbf{(Step 2).} In this step, we show the upper bound:
		\begin{align}\label{Goal-3}
			\P(Z_1^{(1)} \in Q_1(z)) = \sum_{n=1}^\infty I_n(z)
			\lesssim \frac{\kappa_1^{3+2d+2\alpha}}{(|z| + 1)^{d + \alpha}}  \int_0^1 \frac{\mathrm{d}s}{(|\Gamma_sz| + 1)^{d + \alpha}},
		\end{align}
		which is a direct consequence of the following estimate: for some \( C = C( d, \alpha)> 0 \) and any \( n \in \mathbb{N} \),
		\begin{align}\label{Goal-1}
			I_n(z) \lesssim_C \frac{\kappa_1^2 \lambda^{n-1} \e^{-\lambda}n^{2 + 2d + 2\alpha}}{n! (|z| + 1)^{d + \alpha}}  \int_0^1 \frac{\mathrm{d}s}{(|\Gamma_sz| + 1)^{d + \alpha}}.
		\end{align}
		Indeed, for $\beta\geq0$, define $f_\beta(s):= \sum_{n=1}^\infty \frac{n^\beta s^{n}}{n!},s>0$. It is easy to see that
		$f_0(s)= \e^{s}-1$ and for $k\in\mN$, $f_{k}(s)= s f_{k-1}'(s)$. By induction we have
		$$
		f_k(s)\leq C_ks^k \e^{s},\ \ s\geq1.
		$$
		Now for each $\beta>0$ with $\beta=k+r$, where $k\in\mathbb{N}_0$ and $r\in [0,1)$, we see that 
		$$
		f_\beta(s):=\sum_{n=1}^\infty \frac{n^\beta s^{n}}{n!} \leq s^{r} \sum_{n=1}^{[s]} \frac{n^k s^{n}}{n!} + s^{r-1} \sum_{n=[s]+1}^{\infty} \frac{n^{k+1} s^{n}}{n!}  \leq (C_k+C_{k+1})s^\beta \e^{s},\quad s\geq 1.
		$$
		Thus, if \eqref{Goal-1} is proven and let  $\beta:=2 + 2d + 2\alpha$,  then 
		\[
		\sum_{n=1}^\infty \frac{ \lambda^{n-1} \e^{-\lambda} n^{2 + 2d + 2\alpha}}{n! } =\frac{\e^{-\lambda}}\lambda\sum_{n=1}^\infty \frac{ \lambda^{n} n^\beta}{n! } =\frac{\e^{-\lambda} { f_\beta(\lambda)}}{\lambda}\lesssim {  \lambda^{\beta-1}}\lesssim \kappa_1^{1+2d+ 2\alpha}.
		\]
		Now, to achieve \eqref{Goal-1}, we first show the following \textit{claim}: there exists a constant \( C =   C( d, \alpha) > 0 \) such that for all \( n \geq 2 \),
		\begin{align}\label{Proof-of-Goal-1}
			I_n(z) \lesssim_C \frac{ \kappa_1 \lambda^{n-1} \e^{-\lambda} n^{1 + d + \alpha}}{n! (|z| + 1)^{d + \alpha}}  \int_{D_n} \1_{E_0}
			\boldsymbol{\mu}(\mathrm{d}\by) \mathrm{d} \bs,
		\end{align}
		where  $D_n:=[0,1]^n\times\R^{nd}$ and
		$$
		E_0:= \left\{(\bs,\by)\in D_n: \sum_{j=2}^{n} y_j (s_j-s_1) \in \Gamma_{s_1}z+B_2\right\}.
		$$
		{\it Proof of claim:} For the simplicity of notation, we write
		$$
		E_1:= \left\{(\bs,\by)\in D_n:\sum_{j=1}^n y_j \in v+B_1\right\},\ \ 
		E_2:= \left\{(\bs,\by)\in D_n: \sum_{j=1}^{n} y_j s_j \in x+B_1 \right\}.
		$$
		By the very definition, for  $(\bs,\by)\in E_1\cap E_2$, 
		\[
		\left|\sum_{j=2}^{n} y_j (s_j-s_1) -\Gamma_{s_1}z\right|\leq    \left|\sum_{j=1}^{n} y_j s_j - x\right| +s_1 \left| \sum_{j=1}^n y_j - v \right| \leq 2,
		\]
		which implies that 
		\begin{align}\label{EE0}
			E_1\cap E_2\subset E_0.
		\end{align}
		({\it Case $|z|<5$}): 
		By \eqref{EE0} and the fact that for $c_0:=\int_{|y|\geq 1}\dif y/|y|^{d+\alpha}$,
		\begin{align}\label{C0}
			c_0\leq \lambda= \nu^{(1)}(\mathbb{R}^d)\leq c_0 \kappa_1, 
		\end{align}
		by the definition \eqref{eq118}, we clearly have
		\begin{align} \label{eq: Proof-of-claim1}
			I_n(z) & \leq  \frac{\lambda^n \e^{-\lambda}}{n!}\int_{D_n} \1_{E_0}
			\boldsymbol{\mu}(\mathrm{d}\by) \mathrm{d} \bs\leq  \frac{ c_0\kappa_1 \lambda^{n-1} \e^{-\lambda}6^{d+\alpha}}{n!(|z|+1)^{d+\alpha}}\int_{D_n} \1_{E_0}
			\boldsymbol{\mu}(\mathrm{d}\by) \mathrm{d} \bs.
		\end{align}
		({\it Case $|z|\geq 5$}): Since $|\Gamma_1z|^2+|\Gamma_{-1}z|^2 = 2|z|^2$, we have 
		$$
		|\Gamma_{1}z|\vee|\Gamma_{-1}z|\geq |z|.
		$$ 
		Without loss of generality, we assume $|\Gamma_{-1}z|=|x+v|\geq|z|$.
		Since  on $E_1\cap E_2$, 
		$$
		|x+v|-\sum_{j=1}^n (1+s_j)|y_j|\leq\Bigg|\sum_{j=1}^n (1+s_j)y_j-x-v\Bigg|\leq 2,
		$$
		there exists  at least a $j\in \{1,\dots n\}$ such that 
		$$
		2|y_j|\geq (1+ s_j)|y_j|\geq \tfrac{1}{n}(|x+ v|-2)=\tfrac{1}{n}(|\Gamma_{1}z|-2)> \tfrac{1}{2n} (|z|+1),
		$$ 
		where the last inequality is due to $|\Gamma_{1}z|\geq|z|\geq5$.
		Therefore, by the definition \eqref{eq118}, 
		\begin{align} \label{eq: Proof-of-claim2}
			I_n(z) & \leq  \frac{\lambda^n\e^{-\lambda} }{n!} \sum_{j=1}^n \int_{D_n} \1_{E_1\cap E_2}  \1_{\{|y_j|> \frac{1}{4n}(|z|+1)\}}  \mu(\mathrm{d}y_1)\cdots \mu(\mathrm{d}y_n)\mathrm{d}\bs\nonumber\\
			& = \frac{\lambda^n n\e^{-\lambda}}{n!}  \int_{D_n} \1_{E_1\cap E_2} \1_{\{|y_1|> \frac{1}{4n}(|z|+1)\}} \1_{\{|y_1|\geq 1\}}\frac{\kappa(y_1)\mathrm{d} y_1}{\lambda|y_1|^{d+\alpha}}\cdots \mu(\mathrm{d}y_n)\mathrm{d}\bs\nonumber\\
			&\!\!\!\!\stackrel{\eqref{EE0}}{\leq } \frac{\lambda^n n\e^{-\lambda}}{n!}  \int_{D_n} 
			\1_{E_1\cap E_0}  \1_{\{|y_1|> \frac{1}{4n}(|z|+1)\}} \1_{\{|y_1|\geq 1\}}\frac{\kappa(y_1)\mathrm{d} y_1}{\lambda|y_1|^{d+\alpha}}\cdots \mu(\mathrm{d}y_n)\mathrm{d}{\bf s_n}\nonumber\\ 
			& \leq \frac{\kappa_1\lambda^{n-1} n (4n)^{d+\alpha}{\e^{-\lambda}}}{n! (|z|+1)^{d+\alpha}}  \int_{D_n} \1_{E_0}\1_{\left\{\sum_{j=1}^n y_j \in v+B_1\right\}}   \mathrm{d} y_1 \mu(\mathrm{d} y_2)\cdots \mu(\mathrm{d}y_n)\mathrm{d}\bs\nonumber\\
			& = \frac{\kappa_1\lambda^{n-1} n (4n)^{d+\alpha}{\e^{-\lambda}}|B_1|}{n! (|z|+1)^{d+\alpha}}  \int_{D_n} \1_{E_0}(\bs,\by)
			\bmu(\dif\by)\mathrm{d}\bs,
		\end{align}
		where the last step is due to Fubini's theorem and
		$$
		\int_{\mR^d}\1_{\left\{\sum_{j=1}^n y_j \in v+B_1\right\}}\dif y_1=|B_1|=|B_1|\int_{\mR^d}\mu(\dif y_1).
		$$
		Thus the assertion of the claim follows directly by \eqref{eq: Proof-of-claim1} and \eqref{eq: Proof-of-claim2}.
		
		Now, fix $s_1\in[0,1]$. If $|\Gamma_{s_1}z| >4$,	then on $E_0$,	there exists at least a 
		$j\in \{2,\dots, n\}$ such that 
		\[
		|y_j||s_j-s_1| >\tfrac{1}{n-1}(|\Gamma_{s_1}z| -2)> \tfrac{1}{4n}(|\Gamma_{s_1}z|+1),
		\]
		which implies that for any $s_2,\cdots s_{n}\in [0,1]$ and $\bs=(s_1,\cdots,s_n)\in[0,1]^n$,
		\begin{align}\label{Proof-of-Goal-2}
			&\int_{\R^{nd}} \1_{E_0}\bmu(\dif\by)\leq \sum_{j=2}^n  \int_{\R^{nd}} \1_{E_0\cap\left\{ |y_j||s_j-s_1| > \frac{1}{4n}(|\Gamma_{s_1}z|+1) \right\}} \mu(\mathrm{d}y_1)
			\mu(\mathrm{d}y_2)\cdots  \frac{\kappa(y_j)\mathrm{d} y_j}{\lambda |y_j|^{d+\alpha}}\cdots\mu(\mathrm{d}y_n) \nonumber\\
			&\quad\leq \sum_{j=2}^n \frac{\kappa_1|s_j-s_1|^{d+\alpha}(4n)^{d+\alpha}}{\lambda (|\Gamma_{s_1}z|+1)^{d+\alpha}} \int_{\R^{nd}} 
			\1_{\left\{\sum_{j=2}^n y_j (s_j-s_1) \in \Gamma_{s_1}z+B_2\right\} }\mu(\dif y_1) \mu(\mathrm{d}y_2)\cdots \mathrm{d} y_j\cdots \mu(\mathrm{d}y_n) \nonumber\\
			&\quad = \sum_{j=2}^n \frac{\kappa_1|s_j-s_1|^{d+\alpha}(4n)^{d+\alpha}}{\lambda (|\Gamma_{s_1}z|+1)^{d+\alpha}}\frac{|B_2|}{|s_j-s_1|^{d}} 
			\leq \frac{ \kappa_1 (4n)^{d+\alpha+1}}{\lambda (|\Gamma_{s_1}z|+1)^{d+\alpha}}\stackrel{\eqref{C0}}{\leq} \frac{ \kappa_1 n^{d+\alpha+1}}{c_0 (|\Gamma_{s_1}z|+1)^{d+\alpha}}.
		\end{align}
		If $|\Gamma_{s_1}z| \leq 4$, then we trivially have
		\begin{align}\label{Proof-of-Goal-3}
			& \int_{\R^{nd}} \1_{E_0}\bmu(\dif\by)\leq1
			\leq \frac{\kappa_1n^{d+\alpha+1} 5^{d+\alpha}}{(|\Gamma_{s_1}z|+1)^{d+\alpha}}.
		\end{align}
		Plugging \eqref{Proof-of-Goal-2} and \eqref{Proof-of-Goal-3} into \eqref{Proof-of-Goal-1}, we conclude that for any $n\geq 2$,
		\begin{align}\label{eq6}
			I_n(z) &\lesssim \frac{\kappa_1^2 \lambda^{n-1}\e^{-\lambda}n^{2(1+d+\alpha)}}{n! (|z|+1)^{d+\alpha}}  \int_{[0,1]^n}\frac{\mathrm{d}{\bs} }{(|\Gamma_{s_1}z|+1)^{d+\alpha}}
			\nonumber\\
			&= \frac{\kappa_1^2 \lambda^{n-1}\e^{-\lambda} n^{2+2d+2\alpha}}{n! (|z|+1)^{d+\alpha}}  \int_0^1 \frac{\dif s}{(|\Gamma_sz|+1)^{d+\alpha}}.
		\end{align}
		For $n=1$, it holds that 
		\begin{align*}
			I_1(z) &=\lambda \e^{-\lambda}\int_0^1 \int_{\R^{d}} \1_{\left\{ y s \in x+B_1\right\}}\1_{\left\{y \in v+B_1\right\}}  \mu(\mathrm{d}y)\mathrm{d}s\nonumber\\
			& \leq \lambda \e^{-\lambda}\int_0^1   \int_{\R^d}\1_{\{|\Gamma_sz|\leq 2\}}\1_{\{|y-v|\leq 1\}}\mu(\mathrm{d} y)\mathrm{d}s\nonumber\\
			& \leq \e^{-\lambda}\int_0^1 \1_{\{|\Gamma_sz|\leq 2\}}\mathrm{d}s  \int_{|y|\geq 1}\1_{\{|y-v|\leq 1\}}\frac{\kappa_1 \mathrm{d} y}{|y|^{d+\alpha}}.
		\end{align*}
		If $|x|>|v|+3$, then for any $s\in [0,1]$, $|\Gamma_sz|\geq |x|- s|v|>3$, this implies $I_1=0$.
		If $|x|\leq |v| +3$, then 	$|z| +1\leq 2|v|+4 \leq 4(|v|+1)$ and
		\begin{align}\label{eq7}
			I_1(z) \lesssim \frac{\kappa_1 \e^{-\lambda}\int_0^1 \1_{\{|\Gamma_sz|\leq 2\}}\mathrm{d}s  }{(|v|+1)^{d+\alpha}} \leq 
			\frac{\kappa_1 \e^{-\lambda}4^{d+\alpha}}{(|z|+1)^{d+\alpha}}
			\int_0^1 \frac{3^{d+\alpha}}{(|\Gamma_sz|+1)^{d+\alpha}} \mathrm{d}s.
		\end{align}
		Combining \eqref{eq6} and \eqref{eq7}, we get \eqref{Goal-1}.  
		
		\textbf{(Step 3).} In this step, we prove the lower bound: for some $C=C(d,\alpha)>0$,
		\begin{align}\label{Goal-2}
			\P(Z_1^{(1)}\in Q_1(z))\geq  \frac{C\e^{-\lambda}}{(|z|+1)^{d+\alpha}}  \int_0^1 \frac{\dif s}{(|\Gamma_sz|+1)^{d+\alpha}}.
		\end{align}
		Fix $r\in(0,1)$, whose value will be determined below. By the definition of $I_2(z)$, we have
		\begin{align}\label{eq9}
			&\P(Z_1^{(1)}\in Q_1(z))\geq  I_2(z)\geq \frac{\lambda^2 \e^{-\lambda}}2 \int_{D_2} \1_{\left\{y_1 s_1+y_2s_2 \in x+B_1\right\}}\1_{\left\{y_1+y_2 \in v+B_1\right\}}  \bmu(\dif\by)\dif\bs\nonumber\\
			&\quad\geq \frac{\lambda^2 \e^{-\lambda}}2 \int_{D_2} \1_{\left\{ y_2 (s_2-s_1) \in \Gamma_{s_1}z+B_{1-r}\right\}}\1_{\left\{y_1+y_2 \in v+B_r\right\}} \1_{\{\frac{1}{3}<|s_2-s_1| < \frac{2}{5}\}} \bmu(\dif\by)\dif\bs.
		\end{align}
		Notice that for any 	$|w|> 1$,
		\begin{align}\label{eq10}
			w(1+\tfrac{r}{2|w|})+ B_{r/4}\subset  w+B_r\cap \{|y|>1\}.
		\end{align}
		Indeed, if $y\in w(1+\frac{r}{2|w|})+ B_{r/4},$ then 
		\[
		|y-w|\leq \left|y-w\left(1+\tfrac{r}{2|w|}\right) \right|+ \left|w\left(1+\tfrac{r}{2|w|}\right)- w\right| \leq \tfrac{r}{4}+\tfrac{r}{2}<r
		\]
		and 
		\[
		|y| \geq \left|w\left(1+\tfrac{r}{2|w|}\right)\right| - \tfrac{r}{4}= |w|+\tfrac{r}{4}>1.
		\]
		Therefore, for	$|w|>1$, by \eqref{Con3} we have
		\begin{align*}
			\int_{\R^d} \1_{\{y \in w+ B_r \}} \mu(\mathrm{d} y)&=\int_{|y|>1} \1_{\{y \in w+ B_r \}}\frac{\kappa(y)\mathrm{d}y}{\lambda |y|^{d+\alpha }}\nonumber\\
			&\geq \int_{|y|>1} \frac{\1_{\{y \in w+ B_r \}}}{\lambda(|w|+r)^{d+\alpha}}\dif y\stackrel{\eqref{eq10}}{\geq} \frac{|B_{r/4}|}{\lambda(|w|+1)^{d+\alpha}}.
		\end{align*}
		Plugging this into \eqref{eq9} with $w=v-y_2$, we get
		\begin{align*}
			\P(Z_1^{(1)}\in Q_1(z))
			\geq \frac{\lambda \e^{-\lambda}|B_{r/4}|}2 \int_{[0,1]^2}\int_{\R^{d}} &
			\frac{\1_{\left\{|v-y_2|> 1 \right\}} }{(|v-y_2| +1)^{d+\alpha}}\1_{\left\{ y_2 (s_2-s_1) \in \Gamma_{s_1}z+B_{1-r}\right\}}\\
			&\times  \1_{\{\frac{1}{3}<|s_2-s_1| <\frac{2}{5}\}}  \mu(\mathrm{d}y_2)\dif s_1\dif s_2.
		\end{align*}
		Note that $|y_2 (s_2-s_1)-\Gamma_{s_1}z|\leq 1-r$ and $\frac{1}{3}<|s_2-s_1| <\frac{2}{5}$ 
		imply 
		$$
		\tfrac{|y_2|}{3} <|y_2| |s_2-s_1| \leq |\Gamma_{s_1}z| +1-r\leq 2|z|+1
		$$
		and
		$$
		|v-y_2|+1\leq|v|+|y_2|+1 \leq 7|z|+4.
		$$ 
		Therefore, 
		\begin{align}\label{eq16}
			\P(Z_1^{(1)}\in Q_1(z))
			\geq  \frac{\lambda \e^{-\lambda} |B_{r/4}|}{2(7|z| +4)^{d+\alpha}}\int_{[0,1]^2}  J(s_1, s_2)\1_{\{\frac{1}{3}<|s_2-s_1| <\frac{2}{5}\}}\dif s_1\dif s_2,
		\end{align}
		where for $s_1, s_2\in [0,1]$, 
		\[
		J(s_1, s_2):= \int_{\R^{d}} \1_{\left\{ y_2 (s_2-s_1) \in \Gamma_{s_1}z+B_{1-r}\right\}}  
		\1_{\left\{|v-y_2|> 1 \right\}} 
		\mu(\mathrm{d}y_2).
		\]
		By \eqref{Con3}, we have
		\begin{align*}
			J(s_1, s_2) &=  \int_{\R^d} \1_{\left\{ |y_2 (s_2-s_1)-\Gamma_{s_1}z|\leq 1-r\right\}} 
			\1_{\left\{|v-y_2|> 1\right\}}
			\1_{\{|y_2|\geq 1\}} \frac{\kappa(y_2)\mathrm{d}y_2}{\lambda|y_2|^{d+\alpha }}\nonumber\\
			&\geq \frac{|s_2-s_1|^{d+\alpha}}{\lambda \left( |\Gamma_{s_1}z| +1-r\right)^{d+\alpha}} \int_{\R^d} \1_{\left\{ |y_2 (s_2-s_1)-\Gamma_{s_1}z|\leq 1-r\right\}}
			\1_{\left\{|v-y_2|> 1\right\}}\1_{\{|y_2|\geq 1\}}\mathrm{d}y_2.
		\end{align*}
		Noting that $\1_{A_1\cap A_2\cap A_3}\geq \1_{A_1}-\1_{A_2^c}-\1_{A_3^c}$, we further have on  $\{\frac{1}{3}<|s_2-s_1| <\frac{2}{5}\}$,
		\begin{align*}
			J(s_1, s_2) 
			&\geq  \frac{|s_2-s_1|^{d+\alpha}}{\lambda \left( |\Gamma_{s_1}z| +1\right)^{d+\alpha}} \left(|B_{(1-r)/|s_2-s_1|}|-|B_1|-  |B_1|\right)\nonumber\\
			&= \frac{|s_2-s_1|^{d+\alpha}|B_1|}{\lambda\left( |\Gamma_{s_1}z| +1\right)^{d+\alpha}} \left(\left( \frac{1-r}{|s_2-s_1|}\right)^d-2\right)\\
			&\geq \frac{3^{-d-\alpha}|B_1|}{\lambda\left( |\Gamma_{s_1}z| +1\right)^{d+\alpha}} \left(\left( \frac{5(1-r)}{2}\right)^d-2\right).
		\end{align*}
		Plugging this into \eqref{eq16} and noticing that 
		$$
		\inf_{s_1\in[0,1]}\int_0^1 \1_{\{\frac{1}{3}< |s_2-s_1|< \frac{2}{5}\}}\mathrm {d}s_2\geq \frac{1}{15},
		$$ 
		we finally conclude
		$$
		\P(Z_1^{(1)}\in Q_1(z))\geq \frac{ \e^{-\lambda}|B_{r/4}| }{30(7|z| +4)^{d+\alpha}}\left( \int_{0}^1\frac{3^{-d-\alpha}|B_1|\dif s_1}{\left(|\Gamma_{s_1}z| +1\right)^{d+\alpha}}\right) 
		\left(\left( \frac{5(1-r)}{2}\right)^d-2\right).
		$$
		This completes the proof of \eqref{Goal-2} by choosing $r=1/10$. 
	\end{proof}
	
	We observe that the introduction of \( I_n(z) \) is pivotal to the proof of Lemma \ref{lemma2}. Next, we will employ a similar approach to demonstrate that the density function \( q_1(z) \) of \( Z_1^{(0)} \) is strictly positive and has a quantitative lower bound.
	First of all, we prepare the following simple lemma.
	\begin{lemma}\label{Le24}
		For any $n\geq 2$ and $u\in \R^d$ with $|u|\leq n$, there exists $u_1,\cdots u_{n}\in \R^d$ such that 
		\begin{align}\label{eq19}
			\frac{1}{3}\leq |u_i| \leq 1,\ 1\leq i\leq n \quad\mbox{and}\quad \sum_{i=1}^{n} u_i =u.
		\end{align}
	\end{lemma}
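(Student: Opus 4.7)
The plan is to prove the claim by induction on $n \geq 2$, with the inductive step peeling off a single unit vector to reduce the problem from $n$ to $n-1$.

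For the base case $n=2$ with $|u|\leq 2$, I would split into subcases according to the size of $|u|$. When $|u| \geq 2/3$, the symmetric choice $u_1 = u_2 = u/2$ works since $|u_i| = |u|/2 \in [1/3, 1]$. When $|u| < 2/3$, the situation depends on the dimension: if $d \geq 2$, pick any unit vector $e$ orthogonal to $u$ (arbitrary if $u=0$) and set $u_1 = u/2 + e/2$ and $u_2 = u/2 - e/2$, so that $|u_i|^2 = |u|^2/4 + 1/4 \in [1/4, 13/36]\subset [1/9, 1]$; if $d = 1$, assume $u \geq 0$ by symmetry and set $u_1 = u + 1/3$, $u_2 = -1/3$, both of which lie in $[-1,-1/3]\cup[1/3,1]$ and sum to $u$.

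For the inductive step, assume the claim is known for $n-1$ and let $|u| \leq n$ with $n \geq 3$. I would choose $u_n$ with $|u_n|=1$ so that $|u - u_n| \leq n-1$, then apply the induction hypothesis to $u-u_n$ to obtain $u_1,\dots,u_{n-1}$. Specifically, when $|u| \geq 1$, take $u_n = u/|u|$, giving $|u - u_n| = |u| - 1 \leq n - 1$; when $|u| < 1$, take $u_n$ to be any fixed unit vector, so $|u - u_n| \leq |u| + 1 \leq 2 \leq n-1$, where the last inequality uses $n \geq 3$. Concatenation of $u_n$ with the $n-1$ vectors produced by the induction hypothesis gives the desired decomposition.

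The main obstacle is the low-dimensional base case: in dimension one there is no room to distribute the vectors in a direction perpendicular to $u$ when $|u|$ is very small, which is why a naive symmetric split of $u/2$ fails and the explicit asymmetric choice $u_1 = u+1/3$, $u_2 = -1/3$ is needed. Once the base case is settled, the inductive step is straightforward because the condition $n \geq 3$ provides enough slack for the residual $u - u_n$ regardless of whether $u_n$ is chosen parallel or transverse to $u$.
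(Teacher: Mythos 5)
Your proof is correct and follows essentially the same induction as the paper's: a base case at $n=2$, then an inductive step that peels off a single unit vector (in the direction of $u$ when $|u|\geq 1$, arbitrary otherwise) to reduce from $n$ to $n-1$. The only real divergence is your base case for $|u|<2/3$, where you split on dimension and use a transverse unit vector when $d\geq 2$; the paper instead uses the dimension-agnostic collinear split $u_1=-\bar u/3$, $u_2=u+\bar u/3$, which avoids the case distinction, though your version does handle the $u=0$ degeneracy (which the paper quietly skips by writing $\bar u=u/|u|$) more carefully.
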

	\begin{proof}
		We use induction to prove \eqref{eq19}. 
		Recall that $\bar{u}=u/|u|.$
		For $n=2$, if $|u|\in (0,\tfrac{2}{3}]$, by taking
		$$
		u_1=-\bar u/3,\ \  u_2=u-u_1=\bar u(|u|+1/3),
		$$
		then it is easy to see that $|u_2|\in [\tfrac{1}{3},1]$.
		If $|u|\in [\tfrac{2}{3}, 2]$, it suffices to take $u_1=u_2=\tfrac{1}{2}u$.
		Now suppose that \eqref{eq19} is true for some $n\geq 2$. 
		For $|u|\leq n+1$, if we take $u_{n+1}=\bar u$, then
		$$
		|u-u_{n+1}|=||u|-1|\leq n.
		$$
		By induction, there are
		$u_1,\cdots u_n\in \R^d$ with $|u_i|\in[\frac13,1]$ such that $u_1+\cdots+u_n=u-u_{n+1}$. 
	\end{proof}
	We can now present the following sharp two-sided estimates for the density \( q_1(z) \) of the small jump part \( Z^{(0)}_1 \).
	\begin{lemma}
		If \( \kappa(x) \equiv 1 \), then there are constants \( \delta_0,\delta_1, C_0, C_1 > 0 \) such that for all \( z \in \mathbb{R}^{2d} \), 
		\begin{align}\label{Goal1}
			C_0\e^{-\delta_0 (|z| + 2) \log (|z| + 2)} \leq q_1(z) \leq C_1\e^{-\delta_1 (|z| + 2) \log (|z| + 2)}.
		\end{align}
	\end{lemma}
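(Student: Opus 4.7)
The plan is to establish the two bounds separately by exploiting the fact that $\nu^{(0)}$ is supported in $\{|x|\le 1\}$, which on the one hand makes $\phi(w)=\int_0^1\psi_{\nu^{(0)}}(s\xi+\eta)\,\mathrm{d}s$ an entire function on $\mathbb{C}^{2d}$ (good for the upper bound), and on the other hand means that reaching a far-away point $z$ by the process $Z^{(0)}_1$ requires roughly $|z|$ jumps, whose probability is suppressed by a factorial (good for the lower bound). This factorial is the source of the $\exp(-|z|\log|z|)$ rate on both sides.

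For the \textbf{upper bound}, I would start from the Fourier inversion formula
\[
q_1(z)=(2\pi)^{-2d}\int_{\R^{2d}}\e^{\mathrm{i} w\cdot z}\e^{-\phi(w)}\,\dif w
\]
and deform the contour to $\R^{2d}-\mathrm{i}\Theta(z)$, where $\Theta(z)\in\R^{2d}$ is chosen parallel to $z$ with $|\Theta(z)|=\delta\log(|z|+2)$ for some small $\delta>0$. Since $\nu^{(0)}$ has compact support, the integrand is entire, and by Cauchy's theorem the shift produces the factor $\e^{-\Theta(z)\cdot z}\asymp\exp(-\delta|z|\log(|z|+2))$. It then suffices to control the remaining integral. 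Using $|\e^{\mathrm{i}(\xi+\mathrm{i}\theta)\cdot x}-1-\mathrm{i}\xi\cdot x|\lesssim \e^{|\theta||x|}(|x|^2\wedge 1)$ on $\{|x|\le1\}$, one gets the decomposition $\mathrm{Re}(\phi(w-\mathrm{i}\Theta(z)))\ge c(|w|^2\wedge|w|^\alpha)-C\e^{|\Theta(z)|}$. Since $\e^{|\Theta(z)|}=(|z|+2)^\delta$ is sublinear for $\delta$ small, this cost is absorbed by the factor $\e^{-\Theta(z)\cdot z}$, and the residual Gaussian/stable type decay $\e^{-c|w|^\alpha}$ makes the shifted integral absolutely convergent, finishing the upper bound.

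For the \textbf{lower bound}, I would further split the small-jump measure as $\nu^{(0)}=\nu^{(0,\epsilon)}+\nu^{(\epsilon)}$, where $\nu^{(0,\epsilon)}$ is supported on $\{\epsilon\le|x|\le 1\}$ and $\nu^{(\epsilon)}$ on $\{|x|<\epsilon\}$. The medium-jump part $L^{(0,\epsilon)}$ is compound Poisson with rate $\lambda_\epsilon\asymp\epsilon^{-\alpha}$, and the very-small-jump remainder $R^{(\epsilon)}$, after compensation, is a zero-mean Lévy process whose contribution to $Z^{(0)}_1$ can be made arbitrarily small with uniformly positive probability by choosing $\epsilon$ small (this uses sharp small-ball estimates for $R^{(\epsilon)}$ deduced from its characteristic exponent). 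Conditioning on the good event for $R^{(\epsilon)}$, I would imitate the series expansion \eqref{eq118} for the compound Poisson component, restricting attention to the $n$-th term with $n:=\lceil C_0(|z|+2)\rceil$. Applying Lemma \ref{Le24} to the rescaled target $v$ produces jump vectors $u_1,\dots,u_n$ of sizes in $[\tfrac13,1]$ summing to $v$; a second application of the lemma to suitably normalized targets produces compatible jump times $s_1,\dots,s_n\in[0,1]$ whose weighted sum $\sum s_j u_j$ hits $x$. This gives a product set of positive Lebesgue measure on which the integrand of $I_n(z)$ is bounded below, yielding $I_n(z)\gtrsim \lambda_\epsilon^n\e^{-\lambda_\epsilon}/n!$ up to geometric factors. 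Stirling's formula then gives $1/n!\gtrsim\e^{-n\log n}\asymp\e^{-\delta_0(|z|+2)\log(|z|+2)}$.

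The \textbf{main obstacle} is the upper bound: one must verify that the contour shift by $\Theta(z)$ with $|\Theta(z)|$ growing logarithmically is permissible uniformly in $z$, i.e., that $\mathrm{Re}(\phi(w-\mathrm{i}\Theta(z)))$ is still coercive enough in $w$ to beat the worst-case behavior of $\phi(-\mathrm{i}\Theta(z))$. For the lower bound, the secondary difficulty is that Lemma \ref{Le24} alone handles only the velocity constraint $\sum u_j=v$; hitting $x=\sum s_ju_j$ simultaneously requires a careful geometric argument or a second variant of Lemma \ref{Le24} applied to the ``time'' variables, so that the resulting set of admissible $(\bs,\by)$ still has volume of order $\e^{-Cn}$ (not worse), which when combined with $1/n!$ still produces the correct $\e^{-|z|\log|z|}$ rate.
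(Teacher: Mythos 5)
Your upper-bound argument (shifting the Fourier contour by $\mathrm{i}\Theta(z)$ with $|\Theta(z)|\asymp\log(|z|+2)$) is a valid alternative to the paper's approach, which is purely probabilistic: because $\nu^{(0)}$ has compact support, \cite[Theorem 26.1]{Sato} gives $\sup_{s\le 1}\E\,\e^{a|L_s^{(0)}|\log|L_s^{(0)}|}<\infty$, and the paper combines this with Jensen's inequality for $\int_0^1|L^{(0)}_s|\,\dif s$, the convolution decomposition $q_1(z)=\int q_{1/2}(z')q_{1/2}(z-\tilde z')\,\dif z'$, and a Markov/Chernoff bound. Both approaches hinge on the same feature (entire extension of the characteristic exponent from compact support), and the paper's version avoids the coercivity verification for $\mathrm{Re}\,\phi$ after the shift. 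Your claim that $\mathrm{Re}\,\phi(w-\mathrm{i}\Theta)\ge c(|w|^2\wedge|w|^\alpha)-C\e^{|\Theta|}$ is in fact correct (the potentially dangerous linear term $\int\tau\cdot x\,\cos(\zeta\cdot x)\,\nu^{(0)}(\dif x)$ vanishes by the $x\mapsto -x$ symmetry of $\nu^{(0)}$, and the quadratic remainder is controlled by $|\tau|^2\e^{|\tau|}$), but it needs to be written out, whereas the moment-plus-Markov route requires no such bookkeeping.

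For the lower bound there is a genuine gap at exactly the point you flag yourself: Lemma \ref{Le24} handles the velocity sum $\sum u_j=v$, but there is no ``second variant applied to the time variables,'' since $\sum s_ju_j\approx x$ with the $u_j$ already pinned is a constraint of a completely different type, and simply invoking the lemma a second time does not produce it. The paper's solution is a doubling trick. It lower-bounds $q_1(z)$ by the $2n$-th term $I_{2n}(z)$ of the compound-Poisson expansion and restricts the time variables to $s_j\in[1-\varepsilon_n,1]$ for $j\le n$ and $s_j\in[0,\varepsilon_n]$ for $n<j\le 2n$, each restriction costing a volume factor $\varepsilon_n^n$ with $\varepsilon_n=\delta/(4n)$. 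On this set the first $n$ jumps satisfy $\sum_{j\le n}y_js_j\approx\sum_{j\le n}y_j$ and must hit $x$, the last $n$ contribute negligibly to the position but together with the first $n$ must hit $v$, and after Fubini the whole probability factorizes as $\varepsilon_n^{2n}J_n(x)J_n(v-x)$. Lemma \ref{Le24} is then applied separately to the targets $x$ and $v-x$, decoupling the two constraints; the Stirling bound $(2n)!\le(2n)^{2n}$ with $n\asymp|z|$ then yields the $\exp(-\delta_0(|z|+2)\log(|z|+2))$ rate. This decoupling is the missing ingredient in your sketch. A secondary difference: you propose an $\epsilon\to0$ splitting of $\nu^{(0)}$ with small-ball estimates for the remainder, whereas the paper fixes the split at radius $1/3$ (matching the $[1/3,1]$ range in Lemma \ref{Le24}) and obtains $q_1^{(0)}(0)>0$ directly from Fourier inversion, upgraded to a uniform lower bound on a small cube by continuity (Lemma \ref{lemma:small-jump}), which is simpler and avoids any $\epsilon$-uniformity issues.
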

	\begin{proof} 
		{\bf (Lower bound).}
		By the L\'{e}vy-It\^o decomposition, assume that \( L_t^{(0)} = L_t^{(0,0)} + L_t^{(0,1)} \), where \( L_t^{(0,0)} \) and \( L_t^{(0,1)} \) are two independent L\'{e}vy processes with 
		characteristic exponents $\psi_{ \nu^{(0,0)}}$ and $\psi_{ \nu^{(0,1)}}$, respectively, where
		\[
		\nu^{(0,0)}(\mathrm{d}x) :=  \frac{\1_{\{|x| \leq { 1/3}\}}\mathrm{d}x}{|x|^{d+\alpha}}, \quad  \nu^{(0,1)}(\mathrm{d}x) :=  \frac{\1_{\{|x| \in (1/3, 1]\}}\mathrm{d}x}{|x|^{d+\alpha}}.
		\]
		Define
		\[
		Z_t^{(0,i)} := \left( \int_0^t L_s^{(0,i)} \, \mathrm{d}s, L_t^{(0,i)} \right),\ \ i=0,1.
		\]
		Let  $q_1^{(0)}$ be
		the density of \( Z_1^{(0,0)}\) (see Lemma \ref{lemma:small-jump}). 
		Since $Z^{(0)}_1=Z_1^{(0,0)}+Z_1^{(0,1)}$, by the independence,
		\begin{align}\label{GH8}
			q_1(z) = \mathbb{E} \left( q_1^{(0)}(z - Z_1^{(0,1)}) \right).
		\end{align}
		Note that by Lemma \ref{lemma1} and the Fourier inverse transform,
		\[
		q_1^{(0)}(0) = (2\pi)^{2d}\int_{\mathbb{R}^{2d}} \exp \left( -\int_0^1 \psi_{ \nu^{(0,0)}}(s\xi + \eta) \, \mathrm{d}s \right) \, \mathrm{d}\xi \, \mathrm{d}\eta>0. 
		\]
		Thus, by the continuity of $z\mapsto q_1^{(0)}(z)$ (see Lemma \ref{lemma:small-jump}), there exists a $\delta\in(0,\frac12)$ such that 
		\begin{align}\label{eq17}
			c_\delta:= \inf_{z\in Q_\delta(0)} q_1^{(0)}(z) >0.
		\end{align}
		
		Next we treat the large jump part $Z_1^{(0,1)}$ and prove \eqref{Goal1}.  
		Since \( L_t^{(0,1)} \) is a compound Poisson process with L\'{e}vy measure \(  \nu^{(0,1)} \), 
		by repeating the argument in \textbf{Step 1} of Lemma \ref{lemma2}, with \( \lambda \) 
		replaced by \( \lambda_1 :=  \nu^{(0,1)}(\mathbb{R}^d)\in(0,\infty) \) and \( \mu \) replaced 
		by \( \widetilde\mu := \nu^{(0,1)}  /\lambda_1\), we obtain that for any \( z = (x, v) \in \mathbb{R}^{2d} \) and any \( n \in \mathbb{N} \),
		\begin{align}\label{LB-q}
			q_1(z) \stackrel{\eqref{GH8}}{\geq} \inf_{z\in Q_\delta(0)} q_1^{(0)}(z)
			\mathbb{P}\left(Z_1^{(0,1)} \in Q_\delta(z)\right)
			= c_\delta \sum_{n=1}^\infty \frac{\lambda_1^{n} \e^{-\lambda_1}}{n!}I_n(z),
		\end{align}
		where,  with the same notations as in \eqref{eq108},
		\[
		I_n(z)= \int_{[0,1]^{n}}  \int_{\mathbb{R}^{nd}} \1_{\left\{ |\sum_{j=1}^{n} y_j s_j-x|\leq{\delta} \right\}} \1_{\left\{ |\sum_{j=1}^{n} y_j - v|\leq{\delta} \right\}} 
		\widetilde\bmu(\dif \by) \mathrm{d}\bs.
		\]
		Now we fix an integer $n$. Let \( \varepsilon_n := \frac{\delta}{4n} \). Since the support of \( \widetilde\mu \) is contained in \( \{ y : |y| \in (\frac13, 1] \} \),
		\begin{align}\label{eq22}
			I_{2n}(z)&\geq \varepsilon_n^n\int_{[0,1]^{n}}  \int_{\mathbb{R}^{2nd}} \1_{\left\{ |\sum_{j=1}^{n} y_j s_j-x|\leq{3\delta/4} \right\}} \1_{\left\{| \sum_{j=1}^{2n} y_j - v|\leq {\delta} \right\}} 
			\widetilde\bmu(\dif \by) \mathrm{d}\bs\nonumber \\
			&\geq \varepsilon_n^n\int_{[1-\varepsilon_n,1]^{n}}  \int_{\mathbb{R}^{2nd}} \1_{\left\{ |\sum_{j=1}^{n} y_j s_j - x|\leq{3\delta/4} \right\}} \1_{\left\{ |\sum_{j=1}^{2n} y_j - v|\leq{\delta} \right\}} 
			\widetilde\bmu(\dif \by) \mathrm{d}\bs\nonumber \\
			&\geq \varepsilon_n^{2n} \int_{\mathbb{R}^{2nd}} \1_{\left\{ |\sum_{j=1}^{n} y_j - x|\leq{\delta/2} \right\}} \1_{\left\{ |\sum_{j=1}^{2n} y_j - v| \leq{\delta} \right\}} 
			\widetilde\bmu(\dif \by)\nonumber \\
			&\geq \varepsilon_n^{2n} \int_{\mathbb{R}^{2nd}} \1_{\left\{ |\sum_{j=1}^{n} y_j - x|\leq{\delta/2} \right\}} \1_{\left\{ |\sum_{j=n+1}^{2n} y_j +x- v| \leq{\delta/2} \right\}} 
			\widetilde\bmu(\dif \by)\nonumber \\
			&=\varepsilon_n^{2n} J_n(x)J_n(v-x),
		\end{align}
		where the last step is due to Fubini's theorem, and
		$$
		J_n(x):=\int_{\mathbb{R}^{nd}} \1_{\left\{ |\sum_{j=1}^{n} y_j - x|\leq{\delta/2} \right\}} \widetilde\bmu(\dif \by).
		$$
		Now for $n\geq 2$ and $u\in\R^{d}$ with $|u|\leq n$,
		by Lemma \ref{Le24}, there are $n$-points $u_1,\cdots,u_n\in B_1\setminus B_{1/3}$
		such that 
		\begin{align*}
			J_n(u)&=\int_{\mathbb{R}^{nd}} \1_{\left\{ |\sum_{j=1}^{n} y_j - u_j|\leq{\delta/2} \right\}} 
			\widetilde\bmu(\dif \by)\geq \int_{\mathbb{R}^{nd}} \prod_{j=1}^n \1_{\left\{ | y_j - u_j|\leq{\delta/(2n)} \right\}} 
			\widetilde\bmu(\dif \by)\nonumber\\
			&=\prod_{j=1}^n \int_{B_1\setminus B_{1/3}} \1_{\left\{ | y_j - u_j|\leq{\delta/(2n)} \right\}} 
			\frac{\dif y_j}{\lambda_1|y_j|^{d+\alpha}}
			\geq \frac{1}{\lambda_1^n}\prod_{j=1}^n \int_{B_1\setminus B_{1/3}} \1_{\left\{ | y_j - u_j|\leq{\delta/(2n)} \right\}}\dif y_j.
		\end{align*}
		For each $j$, since $u_j\in B_1\setminus B_{1/3}$ and $\delta\in(0,\frac12)$, it is easy to see that there is a $\tilde u_j\in\R^d$ such that
		$$
		B_{\delta/(4n)}(\tilde u_j)\subset B_{\delta/(2n)}(u_j)\cap(B_1\setminus B_{1/3}).
		$$
		Thus, we further have
		\begin{align}\label{eq23}
			J_n(u)\geq \frac{1}{\lambda_1^n}\prod_{j=1}^n \int_{\mathbb{R}^{d}} \1_{B_{\delta/(4n)}(\tilde u_j)}\dif y_j
			= \left( \frac{\delta |B_1|}{4n\lambda_1}\right)^n.
		\end{align}
		Hence, for each $z\in\R^{2d}$, let us choose $n=2[|z|]+1$.
		By \eqref{LB-q}, \eqref{eq22} and \eqref{eq23}, we derive that
		\begin{align*}
			q_1 (z)& \stackrel{\eqref{LB-q}}{\geq} c_\delta \frac{\lambda_1^{2n} \e^{-\lambda_1}}{(2n)!}I_{2n}(z) \stackrel{\eqref{eq22}}{\geq} c_\delta \frac{\lambda_1^{2n} \e^{-\lambda_1}}{(2n)!} \left( \frac{\delta}{4n}\right)^{2n} J_n(x)J_n(v-x) \nonumber\\
			& \stackrel{\eqref{eq23}}{\geq}c_\delta  \frac{\lambda_1^{2n} \e^{-\lambda_1}}{(2n)!} \left( \frac{\delta}{4n}\right)^{2n} { \left( \frac{\delta |B_1|}{4n\lambda_1}\right)^{2n}} 
			=c_\delta  \frac{\e^{-\lambda_1}}{(2n)!} {  \left(\frac{\delta^2|B_1|}{16 n^2}\right)^{2n}}.
		\end{align*}
		From this, we derive the lower bound by the fact that $(2n)!\leq (2n)^{2n}$.
		
		{\bf (Upper bound).}	
		Since the support of $ \nu^{(0)}(\dif x)= \1_{\{|x| \leq 1\}}\mathrm{d}x/|x|^{d+\alpha}$ is contained in the unit ball,
		by \cite[Theorem 26.1]{Sato}, we have for any $a\in(0,1)$,
		$$
		\sup_{s\in[0,1]}\mE \e^{a |L_s^{(0)}|\log|L_s^{(0)}|}<\infty.
		$$
		In particular, if we define $\varphi_a(r):= \e^{a (r+2)\log (r+2)}$ for $a, r\geq 0$, then for any $a\in (0,1)$, 
		\begin{align}\label{Claim}
			\sup_{s\in [0,1]} \E\varphi_a (|L_s^{(0)}|)<\infty.
		\end{align}	
		Since $\varphi_a$ is  increasing and convex, 
		by Jensen's inequality, we see that for any $a\in (0,1)$, 
		\begin{align}
			\sup_{t\in [0,1]}	\E\varphi_a\left(\int_0^t |L_s^{(0)}|\mathrm{d}s\right) \leq	\E\varphi_a\left( \int_0^1 |L_s^{(0)}|\mathrm{d}s\right)\leq \int_0^1 \E \varphi_a(|L_s^{(0)}|)\mathrm{d}s<\infty. 
		\end{align}
		Therefore, by $\varphi_a(|z|)\leq(\varphi_a(2|x|)+\varphi_a(2|v|))/2$, we have for any  $a\in (0,1/2)$,
		\begin{align}\label{eq4}
			\sup_{t\in [0,1]}\E \varphi_a\left(|Z_t^{(0)}|\right) & \leq  \sup_{t\in [0,1]}\frac12	\E\varphi_a\left(2\int_0^t |L_s^{(0)}|\mathrm{d}s\right)  + 	\sup_{t\in [0,1]}
			\frac12 \E\varphi_a (2|L_t^{(0)}|) \nonumber\\
			&\leq C_{a} \sup_{t\in [0,1]}	\E\varphi_{2a}\left(\int_0^t |L_s^{(0)}|\mathrm{d}s\right)  +  C_{a}\sup_{t\in [0,1]} \E\varphi_{2a} (|L_t^{(0)}|) <\infty,
		\end{align}
		where in the last inequality we used inequality $\varphi_a(2r)\leq C_a\varphi_{2a}(r)$.
		Noticing that 
		\begin{align*}
			Z_1^{(0)}=\left(\int_0^{1/2}(L_{s+1/2}^{(0)}-L_{1/2}^{(0)})\mathrm{d}s +\int_0^{1/2}L_s^{(0)}\mathrm{d}s +\frac{1}{2}L_{1/2}^{(0)}, L_1^{(0)}-L_{1/2}^{(0)}+L_{1/2}^{(0)} \right),
		\end{align*}
		by the independence of the increments, we have 
		\begin{align*}
			q_1(z) = \int_{\R^{2d}} q_{1/2} (z') q_{1/2}(z-\tilde{z}')\mathrm{d}z'=: I_1+I_2,
		\end{align*}
		where $\tilde{z}':=(x'+\frac{1}{2}v', v')$ and 
		\begin{align*}
			I_1 &:=  \int_{|z-\tilde{z}'| \leq |z|/2} q_{1/2} (z') q_{1/2}(z-\tilde{z}')\mathrm{d}z',\nonumber\\
			I_2 &:=  \int_{|z-\tilde{z}'| > |z|/2} q_{1/2} (z') q_{1/2}(z-\tilde{z}')\mathrm{d}z'.
		\end{align*}
		For $I_1$, since $|z-\tilde{z}'| \leq |z|/2$ implies that 
		\[
		\tfrac{1}{2}|z|\leq |\tilde{z}'| 
		=\sqrt{\left|x'+ \tfrac{1}{2}v'\right|^2 + |v'|^2 } \leq |z'|+ \tfrac{1}{2}|v'| \leq \tfrac{3}{2}|z'| \Rightarrow |z'|\geq \tfrac{1}{3}|z|.
		\]
		Therefore, it holds that 
		\begin{align}\label{eq2}
			I_1& \leq \Vert q_{1/2} \Vert_\infty 
			\int_{|z'|\geq \tfrac{1}{3}|z|}
			q_{1/2}(z')\mathrm{d}z'= \Vert q_{1/2} \Vert_\infty  \P\left(|Z_{1/2}^{(0)}|\geq \frac{1}{3}|z|\right).
		\end{align}
		For $I_2$, using the change of variable, we have
		\begin{align}\label{eq3}
			I_2 & = \int_{|z'| > |z|/2} q_{1/2}\left(x-x'-\frac{v-v'}{2}, v-v' \right)q_{1/2}(z')\mathrm{d}z'\leq \Vert q_{1/2} \Vert_\infty  \P\left(|Z_{1/2}^{(0)}|\geq \frac{1}{2}|z|\right).
		\end{align}
		Combining \eqref{eq2}, \eqref{eq3} and the Markov inequality, fixing any $a\in (0,1/2)$, we conclude that 
		\begin{align}\label{eq5}
			q_1(z)\leq 2 \Vert q_{1/2} \Vert_\infty  \P\left(|Z_{1/2}^{(0)}|\geq \frac{1}{3}|z|\right)\leq \frac{2 \Vert q_{1/2} \Vert_\infty }{ \varphi_a(|z|/3)}\E   \varphi_a \left(| Z_{1/2}^{(0)}|\right).
		\end{align}
		Combining \eqref{eq4} and \eqref{eq5}, we get the upper bound. 
	\end{proof} 
	
	Now we show the following quantitative lower bound estimate for the general small jump part.
	
	\begin{lemma}\label{lemma:small-jump-lowerbound}
		Under \eqref{Con3}, there is a constant \( C_1=C_1(d,\alpha) > 0 \) such that for \( \hat{c} := \int_{|y| \in (1/3, 1]} \frac{\dif y}{|y|^{d+\alpha}} \),
		\begin{align}\label{KD1}
			\inf_{z \in Q_1(0)} q_1(z)\geq C_1 \e^{-\hat{c} \kappa_1}.
		\end{align}
	\end{lemma}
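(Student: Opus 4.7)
The plan is to isolate the $\kappa\equiv 1$ contribution of the small-jump L\'{e}vy measure from the ``excess'' induced by general $\kappa$. I would write
\[
\nu^{(0)}(\dif x) = \nu^{(0)}_\ast(\dif x) + \nu^{(0)}_e(\dif x),\qquad
\nu^{(0)}_\ast(\dif x) := \1_{\{|x|\leq 1\}}\frac{\dif x}{|x|^{d+\alpha}},\ \
\nu^{(0)}_e(\dif x) := (\kappa(x)-1)\1_{\{|x|\leq 1\}}\frac{\dif x}{|x|^{d+\alpha}}.
\]
Both are non-negative measures that, by linearity, inherit the sphere-cancellation in \eqref{Con3}, so they define independent mean-zero L\'{e}vy processes $\bar L$ and $M$ with $L^{(0)}\stackrel{d}{=}\bar L + M$. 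Setting $\bar Z_t := (\int_0^t\bar L_s\,\dif s,\bar L_t)$, $Z^M_t := (\int_0^t M_s\,\dif s, M_t)$, and writing $\bar q_1$ for the density of $\bar Z_1$, independence yields the key identity
\[
q_1(z) = \E\bigl[\bar q_1(z - Z^M_1)\bigr].
\]

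I would then combine two ingredients. First, the preceding lemma---handling exactly the $\kappa\equiv 1$ case---provides a pointwise lower bound $\bar q_1(\zeta)\geq C_0\,\e^{-\delta_0(|\zeta|+2)\log(|\zeta|+2)}$ for all $\zeta\in\R^{2d}$, with $C_0,\delta_0>0$ depending only on $d,\alpha$. Second, since $\int|x|^2\,\nu^{(0)}_e(\dif x)\leq C(\kappa_1-1)\leq C\kappa_1$, an elementary variance calculation (using the usual identity $\text{Cov}(M_s,M_t)=(s\wedge t)\E|M_1|^2$ for mean-zero L\'evy processes) gives $\E|Z^M_1|^2 \lesssim \kappa_1$, so Chebyshev produces some $C'=C'(d,\alpha)>0$ with $\P(|Z^M_1|\leq C'\sqrt{\kappa_1})\geq \tfrac12$. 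For $z\in Q_1(0)$ (so $|z|\leq \sqrt 2$), restricting the expectation to this good event and using $|z-Z^M_1|\leq\sqrt 2 + C'\sqrt{\kappa_1}$ together with monotonicity of $\bar q_1$'s lower bound in $|\zeta|$ gives
\[
q_1(z) \;\geq\; \tfrac{C_0}{2}\,\e^{-\delta_0(\sqrt 2 + C'\sqrt{\kappa_1}+2)\log(\sqrt 2 + C'\sqrt{\kappa_1}+2)} \;\geq\; C_2\,\e^{-C_3\sqrt{\kappa_1}\log(\kappa_1+\e)}.
\]

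The last step is to absorb the sub-linear factor $\sqrt{\kappa_1}\log(\kappa_1+\e)$ into the target linear exponent $\hat c\kappa_1$: since $\sqrt{\kappa_1}\log(\kappa_1+\e)=o(\kappa_1)$, a case split at a threshold $K=K(d,\alpha)$ handles $\kappa_1\geq K$ (where $C_3\sqrt{\kappa_1}\log(\kappa_1+\e)\leq\hat c\kappa_1$ directly), while for $\kappa_1\in[1,K]$ both sides are uniformly bounded so $C_1$ can be chosen small enough. I expect the main obstacle to be precisely this quantitative reconciliation: the Chebyshev control of the excess process operates at scale $\sqrt{\kappa_1}$, whereas the compound-Poisson character of the medium-sized jumps of $L^{(0)}$ dictates an exponent linear in $\kappa_1$. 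The absorption succeeds only because the target bound is intentionally loose---the true order of $q_1$ on $Q_1(0)$ should be polynomial in $\kappa_1^{-1}$, much larger than $\e^{-\hat c\kappa_1}$ when $\kappa_1$ is large---so a crude second-moment argument for the excess part suffices and no sharp concentration estimate is required.
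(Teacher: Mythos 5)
Your proposal is correct and follows essentially the same route as the paper's proof: decompose $\nu^{(0)}$ into the $\kappa\equiv 1$ piece and the non-negative excess $(\kappa-1)\1_{\{|x|\le 1\}}\,\dif x/|x|^{d+\alpha}$, apply the two-sided bound from the preceding lemma to the $\kappa\equiv 1$ density, control the excess process by an It\^o-isometry/Chebyshev second-moment bound of order $\kappa_1$, and absorb the resulting $O(\sqrt{\kappa_1}\log\kappa_1)$ exponent into the target $\hat c\kappa_1$ since it is $o(\kappa_1)$. The paper makes the final absorption slightly more explicit (taking $R=\sqrt{8(1+C_2\kappa_1)}$ so the Chebyshev failure probability is $\le 1/2$ uniformly over $z\in Q_1(0)$), but this is the same computation you sketch.
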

	\begin{proof}
		By the L\'evy-It\^o decomposition, assume that \( L_t^{(0)} = L_t^{(0,0)} + L_t^{(0,1)} \), where \( L_t^{(0,0)} \) and \( L_t^{(0,1)} \) are two independent L\'evy processes with characteristic exponents \(\psi_{\nu_{0,0}}\) and \(\psi_{\nu_{0,1}}\), respectively, where the measures \(  \nu_{0,0}\) and \(\nu_{0,1}\) are defined as:
		\[
		\nu_{0,0}(\mathrm{d}x) := \1_{\{|x| \leq 1\}} \frac{\mathrm{d}x}{|x|^{d+\alpha}} , \quad \nu_{0,1}(\mathrm{d}x) := \1_{\{|x| \leq 1\}}  \frac{\kappa(x) - 1}{|x|^{d+\alpha}} \, \mathrm{d}x. 
		\]
		Define
		\[
		Z_t^{(0,i)} := \left( \int_0^t L_s^{(0,i)} \, \mathrm{d}s, L_t^{(0,i)} \right),\ \ i=0,1.
		\]
		Let \(q_1^{(0)}\) be the density of \( Z_1^{(0,0)} \). Since \( Z^{(0)}_1 = Z_1^{(0,0)} + Z_1^{(0,1)} \), by independence, we have
		\begin{align}\label{GH9}
			q_1(z) = \mathbb{E} \left( q_1^{(0)}(z - Z_1^{(0,1)}) \right).
		\end{align}
		Let \( N(E,t) := \sum_{s\leq t} \1_E(L_s^{(0,1)} - L_{s-}^{(0,1)}) \) be the Poisson random measure associated with \( L_t^{(0,1)} \). By the L\'evy-It\^o decomposition and \eqref{DQ2}, we can write
		\[
		L_t^{(0,1)} = \int_0^t \int_{\R^d \setminus \{0\}} x \widetilde{N}(\mathrm{d}x, \mathrm{d}s),
		\]
		where \( \widetilde{N}(\mathrm{d}x, \mathrm{d}s) = N(\mathrm{d}x, \mathrm{d}s) -  \nu_{0,1}(\mathrm{d}x) \mathrm{d}s \) is the compensated Poisson random measure. By It\^o's isometry, we get
		\begin{align*}
			\mathbb{E}\left(\left|L_t^{(0,1)}\right|^2\right) 
			= t \int_{\R^d \setminus \{0\}} |x|^2  \nu_{0,1}(\mathrm{d}x
			\leq t \kappa_1 \int_{\{0 < |x| \leq 1\}} \frac{|x|^2}{|x|^{d+\alpha}} \, \mathrm{d}x =: C_2 t \kappa_1.
		\end{align*}
		Thus, for any \( R > 0 \), by Markov's inequality and \( |y - x|^2 \leq 2|y|^2 + 2|x|^2 \), for any \( x, v \in \R^d \), we have
		\begin{align}\label{eq24}
			\mathbb{P}\left(\left| \int_0^1 L_s^{(0,1)} \, \mathrm{d}s - x \right| > R\right) 
			& \leq \frac{2}{R^2} \left( |x|^2 + \mathbb{E}\left(\left| \int_0^1 L_s^{(0,1)} \, \mathrm{d}s \right|^2 \right) \right) \leq \frac{2}{R^2} \left( |x|^2 + C_2 \kappa_1 \right),
		\end{align}
		and
		\begin{align}\label{eq25}
			\mathbb{P}\left( \left| L_1^{(0,1)} - v \right| > R \right) 
			\leq \frac{2}{R^2} \left( |v|^2 + \mathbb{E}\left(\left| L_1^{(0,1)} \right|^2 \right) \right)
			\leq \frac{2}{R^2} \left( |v|^2 + C_2 \kappa_1 \right).
		\end{align}
		Now, for given \( z = (x,v) \in \R^{2d} \) and \( R > 0 \), combining \eqref{Goal1}, \eqref{GH9}, \eqref{eq24} and \eqref{eq25}, we get
		\begin{align*}
			q_1(z) 
			& \geq \inf_{w \in Q_R(0)} q_1^{(0)}(w) \, \mathbb{P}(Z_1^{(0,1)} \in Q_R(z))\geq C_0 \e^{-\delta_0(R+2)\log (R+2)} \mathbb{P}(Z_1^{(0,1)} \in Q_R(z)) \\
			& \geq C_0 \e^{-\delta_0 (R+2)\log (R+2)} \left( 1 - \mathbb{P}\left(\left| \int_0^1 L_s^{(0,1)} \, \mathrm{d}s - x \right| > R \right) - \mathbb{P}\left( \left| L_1^{(0,1)} - v \right| > R \right) \right) \\
			& \geq C_0 \e^{-\delta_0 (R+2)\log (R+2)} \left( 1 - \frac{2}{R^2}(|z|^2 + 2C_2 \kappa_1) \right).
		\end{align*}
		In particular, for \( z \in Q_1(0) \), by taking \( R = \sqrt{8(1 + C_2 \kappa_1)} \), we obtain
		\begin{align*}
			\inf_{z \in Q_1(0)} q_1(z) 
			\geq C_0 \e^{-\delta_0 (R+2)\log (R+2)} \inf_{z \in Q_1(0)} \left( 1 - \frac{2}{R^2}(|z|^2 + 2C_2 \kappa_1) \right) 
			\geq \frac{C_0}{2} \e^{-\delta_0 (R+2)\log (R+2)},
		\end{align*}
		which in turn implies \eqref{KD1}  since $(R+2)\log (R+2)= o(\kappa_1)$ as $\kappa_1\to\infty$.
	\end{proof}
	The right-hand side of the inequality in Lemma \ref{lemma2} involves an integral, which can be challenging to work with directly. To address this, we introduce a comparable quantity that is crucial for establishing the upper bound estimate for the heat kernel. This new quantity is more explicit and easier to handle, facilitating our analysis.
	\begin{lemma}\label{lemma3}
		For  $\beta>1$, there is a constant $C=C(d,\beta)>1$ such that
		for all $z=(x,v)\in \R^{2d}$, 
		\begin{align}\label{Goal-4}
			\int_0^1 \frac{\dif s}{(|\Gamma_sz|+1)^{\beta}}  \asymp_C \frac{\left(\inf_{s\in [0,1]}|\Gamma_sz| +1 \right)^{1-\beta}}{|z|+1}=:\frac{\cM_\beta(z)}{|z|+1}.
		\end{align}
	\end{lemma}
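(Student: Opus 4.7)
The plan is to reduce the estimate to a one-dimensional integral via a linear substitution, then do a small three-case analysis mirroring Remark \ref{Remark}. Assuming $v \neq 0$ (the degenerate case $v=0$ gives $|\Gamma_s z| \equiv |x|$ and both sides of \eqref{Goal-4} equal $(|x|+1)^{-\beta}$), set $\bar v := v/|v|$, $a := \langle x, \bar v\rangle$ and $b := \sqrt{|x|^2 - a^2}$, so that $|\Gamma_s z|^2 = (s|v|-a)^2 + b^2$. The substitution $u = s|v| - a$ converts the integral into
\[
\int_0^1 \frac{\dif s}{(|\Gamma_s z|+1)^{\beta}} = \frac{1}{|v|}\int_{-a}^{|v|-a} \frac{\dif u}{(\sqrt{u^2+b^2}+1)^\beta}.
\]

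The main technical lemma to establish is the following one-dimensional estimate: for $\alpha, L, b \geq 0$ and $\beta > 1$, with $\tilde m := \sqrt{\alpha^2+b^2}$,
\[
\int_\alpha^{\alpha+L} \frac{\dif u}{(\sqrt{u^2+b^2}+1)^\beta} \asymp_{C(\beta,d)} \frac{L \wedge (\tilde m + 1)}{(\tilde m+1)^\beta}.
\]
The lower bound comes from noting that on $[\alpha,\alpha + (L \wedge (\tilde m +1))]$ one has $\sqrt{u^2+b^2} \leq \tilde m + (u-\alpha) \lesssim \tilde m + 1$, hence the integrand is comparable to $(\tilde m+1)^{-\beta}$ there. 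For the upper bound, use $\sqrt{u^2+b^2} \geq \tilde m$ to control the initial portion, then $\sqrt{u^2+b^2} \gtrsim u$ once $u \geq \tilde m+1$, so that the tail $\int_{\tilde m+1}^\infty u^{-\beta}\dif u \asymp (\tilde m+1)^{1-\beta}$ dominates.

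Now apply the one-dimensional lemma according to the sign-of-$\langle x,v\rangle$ trichotomy of \eqref{AA6}. If $a \leq 0$ the integration range $[-a,|v|-a]$ lies in $\{u \geq 0\}$ with $\alpha = |a|$, so $\tilde m = |x| = m$. If $a \geq |v|$ the range lies in $\{u \leq 0\}$ and the substitution $u \mapsto -u$ reduces it to $\alpha = a-|v|$, $\tilde m = \sqrt{(a-|v|)^2+b^2} = |x-v| = m$. If $0 \leq a \leq |v|$, split the integral at $u=0$ into two pieces of lengths $a$ and $|v|-a$ and apply the lemma to each with $\alpha = 0$, $\tilde m = b = m$; the two contributions sum to $\asymp (|v| \wedge (b+1)) (b+1)^{-\beta}$ via the elementary identity $(a \wedge c) + ((|v|-a)\wedge c) \asymp |v| \wedge c$ for $c=b+1$. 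In every case one obtains
\[
\int_{-a}^{|v|-a} \frac{\dif u}{(\sqrt{u^2+b^2}+1)^\beta} \asymp \frac{|v| \wedge (m+1)}{(m+1)^\beta},
\]
so after dividing by $|v|$ the left-hand side of \eqref{Goal-4} equals $(m+1)^{1-\beta}/\max(|v|,m+1)$.

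To finish, one verifies $|z|+1 \asymp \max(|v|,m+1) \asymp |v|+m+1$ in each case: the triangle inequality $|x| \leq m + |v|$ (which holds for each of the three possible values $m \in \{b, |x|, |x-v|\}$) gives $|z| \leq |x|+|v| \lesssim m+|v|$, while $|z| \geq \max(|x|,|v|) \geq m \vee |v|$ gives the reverse. The main obstacle is mostly bookkeeping: keeping track that the symbol $m$ denotes three different geometric quantities across the cases and checking that each produces the same final normalization $|z|+1 \asymp m+|v|+1$. Once the one-dimensional lemma is in hand, each case reduces to a short calculation.
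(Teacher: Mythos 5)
Your proof is correct. The core geometric idea—decompose $x$ orthogonally against $v$ (your $a,b$ are the paper's $s_0|v|, |y|$) and then split on the sign of $\langle x,v\rangle$—is the same as in the paper, but you organize it differently: the paper first disposes of $|z|<3$ and $|v|\leq|z|/3$ by hand, then computes the resulting one-dimensional integral explicitly in each of the three $s_0$-cases; you instead isolate a single clean one-dimensional estimate,
\[
\int_\alpha^{\alpha+L}\frac{\dif u}{(\sqrt{u^2+b^2}+1)^\beta}\asymp_\beta \frac{L\wedge(\tilde m+1)}{(\tilde m+1)^\beta},\qquad \tilde m=\sqrt{\alpha^2+b^2},
\]
and apply it uniformly, with the small-$|v|$ situation absorbed automatically by the $L\wedge(\tilde m+1)$ factor rather than handled as a separate case. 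This buys you a shorter and more modular argument; the price is the extra step at the end of checking $|z|+1\asymp\max(|v|,\,m+1)$, which you verify correctly via $|x|\leq m+|v|$ and $|z|\geq m\vee|v|$ in each case. Two small remarks worth flagging if you write this up: your variable $\alpha$ in the one-dimensional lemma clashes with the paper's global use of $\alpha\in(0,2)$ for the stability index and should be renamed; and the constant in your lemma depends only on $\beta$ (not $d$), consistent with the fact that the claimed comparison constant $C(d,\beta)$ in Lemma~\ref{lemma3} need not actually depend on $d$.
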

	\begin{proof} 
		In the following proof, all the constants only depend on $d,\beta$. We fix $z=(x,v)\in\R^{2d}$.
		When $|z|<3$, since $|\Gamma_sz|\leq 2|z|$ for $s\in[0,1]$, it holds that 
		\[
		\int_0^1 \frac{\dif s}{(|\Gamma_sz|+1)^{\beta}} \asymp 1\asymp \frac{\cM_\beta(z)}{|z|+1}.
		\]
		Now in	the following proof, we assume that $|z|\geq 3$. 
		If $|v|\leq \frac{|z|}3$, then $$|x| =\sqrt{|z|^2 -|v|^2}\geq \tfrac{2\sqrt{2}}{3}|z|\geq \tfrac{2}{3}|z|,$$ 
		which implies that for any $s\in [0,1]$,
		$$
		\tfrac{1}{3}|z| \leq |\Gamma_sz| \leq 2|z|.
		$$
		In this case, we see that when $|v|\leq |z|/3$,
		$$			\int_0^1 \frac{\dif s}{(|\Gamma_sz|+1)^{\beta}} \asymp \frac{1}{(|z|+1)^{\beta}} \asymp \frac{1}{|z|+1} \frac{1}{\left(\inf_{s\in [0,1]}|\Gamma_sz| +1 \right)^{\beta-1}}
		=\frac{\cM_\beta(z)}{|z|+1}.
		$$
		Below we assume that 
		\begin{align}\label{AA3}
			|z|\geq |v|\geq 	\tfrac{1}{3}|z|	\geq 1. 
		\end{align}
		Let 
		$$
		x= y + s_0 v,\ \ s_0=\<x,v\>/|v|^2,\ \ \<y,v\>=0.
		$$ 
		Then we have the following estimate:
		\begin{align}\label{eq11}
			|\Gamma_sz|+1 \asymp \sqrt{|\Gamma_sz|^2+1} = \sqrt{|y|^2+ |s-s_0|^2 |v|^2 +1}\asymp |y|+ |s-s_0||v| +1.
		\end{align}
		We consider three cases:
		
		({\it Case 1: $s_0<0$.}) In this case, by \eqref{eq11} we have
		\begin{align}\label{Case1-3}
			\inf_{s\in [0,1]}|\Gamma_sz| +1 \asymp |y|+ \inf_{s\in [0,1]} |s-s_0||v| +1 = |y|+  |s_0||v| +1.
		\end{align}
		Combining \eqref{eq11} and the change of variable, it holds that 
		\begin{align}\label{Case1-1}
			& \int_0^1 \frac{\dif s}{(|\Gamma_sz|+1)^{\beta}} \asymp \int_0^1 \frac{\dif s}{(|y|+ (s-s_0)|v| +1)^{\beta}}
			= \frac{1}{|v|}\int_{|s_0||v| + |y|}^{(1+|s_0|)|v| + |y|} \frac{\mathrm{d} s}{(s+1)^{\beta}} \nonumber\\
			&\qquad= \frac{1}{(\beta-1)|v|} \left(\frac{1}{(|s_0||v| + |y|+1)^{\beta-1}} -\frac{1}{((1+ |s_0|)|v| + |y|+1)^{\beta-1}}\right).
		\end{align}
		Using \eqref{AA3}, we see that 
		\[
		|z| =\sqrt{ |v|^2+|x|^2} =\sqrt{ |y|^2 + |s_0|^2|v|^2 + |v|^2} \geq \tfrac{|s_0| |v| + |y|+1}{3},
		\]
		and
		\begin{align}\label{Case1-2}
			& (1+ |s_0|)|v| + |y|+1 \geq 	|s_0| |v| + |y|+1 +\tfrac{|z|}3\geq 	\tfrac{10}{9}
			(|s_0| |v| + |y|+1 ). 
		\end{align}
		Combining  \eqref{Case1-1} and \eqref{Case1-2}, we obtain that
		\begin{align*}
			\int_0^1 \frac{\dif s}{(|\Gamma_sz|+1)^{\beta}} 
			&\asymp \frac{1}{|z|+1} \left(\frac{1}{(|s_0||v| + |y|+1)^{\beta-1}} -\frac{1}{((1+ |s_0|)|v| + |y|+1)^{\beta-1}}\right) \nonumber\\
			&\asymp   \frac{1}{|z|+1} \frac{1}{(|s_0||v| + |y|+1)^{\beta-1}},
		\end{align*}
		which together with \eqref{Case1-3} yields \eqref{Goal-4}.
		
		({\it Case 2: $s_0>1$.}) In this case, by \eqref{eq11} we have
		\begin{align}\label{Case2-1}
			\inf_{s\in [0,1]}|\Gamma_sz| +1 \asymp |y|+ \inf_{s\in [0,1]} |s-s_0||v| +1 = |y|+  
			(s_0-1)|v|+1.
		\end{align}
		Similar to \eqref{Case1-1}, 	it follows from \eqref{eq11} that 
		\begin{align*}
			& \int_0^1 \frac{\dif s}{(|\Gamma_sz|+1)^{\beta}} \asymp \int_0^1 \frac{\dif s}{(|y|+ (s_0-s)|v| +1)^{\beta}}  = \frac{1}{|v|}\int_{(s_0-1)|v| + |y|}^{s_0|v| + |y|} \frac{\mathrm{d} s}{(s+1)^{\beta}} \nonumber\\
			&\quad=\frac{1}{(\beta-1)|v|} \left(\frac{1}{((s_0-1)|v| + |y|+1)^{\beta-1}} -\frac{1}{(s_0|v| + |y|+1)^{\beta-1}}\right).
		\end{align*}
		Using \eqref{AA3}, it holds that 
		\[
		|z| =\sqrt{ |v|^2+|x|^2} =\sqrt{ |y|^2 + |s_0|^2|v|^2 + |v|^2} \geq \tfrac{(s_0-1) |v| + |y|+1}2,
		\]
		and
		$$
		s_0|v| + |y|+1 \geq  (s_0-1)|v| + |y|+1 +\tfrac{|z|}{3} \geq \tfrac{7}{6}\left(  (s_0-1)|v| + |y|+1 \right).
		$$
		Using a similar argument as in case 1, we get \eqref{Goal-4}. 
		
		({\it Case 3: $s_0\in [0,1]$.}) In this case, it holds that 
		$$
		\inf_{s\in [0,1]}|\Gamma_sz| +1 = |x-s_0v| +1 = |y| +1	.
		$$
		According to \eqref{eq11}, 
		\begin{align}\label{Case3-1}
			& \int_0^1 \frac{\dif s}{(|\Gamma_sz|+1)^{\beta}} \asymp \int_0^1 \frac{\dif s}{(|y|+ |s-s_0||v| +1)^{\beta}}\nonumber\\
			&\quad = \frac{1}{|v|}\int_{|y|}^{s_0|v| + |y|} \frac{\mathrm{d} s}{(s+1)^{\beta}} + \frac{1}{|v|}\int_{ |y|}^{(1-s_0)|v| + |y|} \frac{\mathrm{d} s}{(s+1)^{\beta}} \\
			&\quad\leq 	\frac{2}{|v|}
			\int_{|y|}^{\infty} \frac{\mathrm{d} s}{(s+1)^{\beta}} 
			\stackrel{\eqref{AA3}}{\lesssim} \frac{1}{|z|+1} \frac{1}{(|y|+1)^{\beta-1}}=\frac{\cM_\beta(z)}{1+|z|}. \nonumber
		\end{align}
		For the lower bound, using the fact that $\max\{ s_0, 1-s_0\}\geq \frac{1}{2}$,
		by \eqref{Case3-1}, we also have 
		$$
		\int_0^1 \frac{\dif s}{(|\Gamma_sz|+1)^{\beta}} 
		\geq \frac{1}{|v|}\int_{|y|}^{\frac{1}{2}|v| + |y|} \frac{\mathrm{d} s}{(s+1)^{\beta}}=\frac{1}{(\beta-1)|v|}\left(\frac{1}{(|y| +1)^{\beta-1}}- \frac{1}{(\frac{1}{2}|v| + |y| +1)^{\beta-1}} \right).
		$$
		Recalling that $|v|\geq \frac{|z|}3 \geq \frac{1}{6}(|y|+1)$, by \eqref{AA3} we conclude
		\begin{align}\label{Case3-2}
			& \int_0^1 \frac{1}{(|\Gamma_sz|+1)^{\beta}} \gtrsim  \frac{1}{|z| +1}\frac{1}{(|y| +1)^{\beta-1}}\left(1- \left(\tfrac{6}{7}\right)^{\beta-1}\right).
		\end{align}
		Combining \eqref{Case3-1} and \eqref{Case3-2}, we get \eqref{Goal-4}. 
	\end{proof}
	
	Now we are in a position to give the proof of the main result.
	
	\begin{proof}[Proof of Theorem \ref{thm1}]
		By \eqref{Scaling}, it suffices to consider $p_1(z)=p^{\nu_t}_1(z)$ under the assumption \eqref{Con3}. 
		
		{\bf (Lower bound estimate).}
		Recall the definition of $Q_r(z)$ in \eqref{Def-of-Q} and \eqref{Nbeta}.
		By \eqref{Identity}, Lemmas  \ref{lemma2}, \ref{lemma:small-jump-lowerbound} and \ref{lemma3}, we have 
		\begin{align*}
			p_1(z) &=\E\left(q_1(z- Z_1^{(1)})\right) \geq \E\left(q_1(z- Z_1^{(1)})\1_{\left\{z-Z_1^{(1)}\in Q_1(0)\right\}}\right)\nonumber\\
			&\gtrsim \e^{-c_0\kappa_1}\inf_{w\in Q_1(0)} q_1(w)\frac{\cM_{d+\alpha}(z)}{(|z|+1)^{d+\alpha+1}}\gtrsim \e^{-c_0^* \kappa_1}  \cN_{d+\alpha}(z),
		\end{align*}
		where  $c_0^*= c_0+\hat{c}_0= \int_{|y|>1/3} \frac{\dif y}{|y|^{d+\alpha}}$. 
		
		{\bf (Upper bound and gradient estimates).} Fix $j \in \N_0$. By observing \eqref{Identity}, we have
		\begin{align}\label{upper-bound-1}
			\left|\nabla^{j}_{z} p_1(z)\right| \leq \E\left( \left|\nabla^{j}_{z} q_1(z - Z_1^{(1)})\right| \1_{\left\{z - Z_1^{(1)} \in Q_{|z|/2}(0)\right\}}\right) + 2 \sup_{|x'|\vee |v'| > |z|/2} \left|\nabla^{j}_{z'} q_1(z')\right|.
		\end{align}
		By Lemma \ref{lemma:small-jump}, we have
		$$
		2 \sup_{|x'|\vee |v'| > |z|/2} \left|\nabla^{j}_{z'} q_1(z')\right| \lesssim \frac{\kappa^{2d+2\alpha}_1}{(|z| + 1)^{2d + 2\alpha}}.
		$$
		Noting that $\inf_{s\in[0,1]} |\Gamma_s z| \leq 2|z|$, and using the definition \eqref{Goal-4} of $\cM_{d+\alpha}(z)$, we get
		$$
		2 \sup_{|x'|\vee |v'| > |z|/2} \left|\nabla^{j}_{z'} q_1(z')\right| \lesssim \frac{\kappa^{2d+2\alpha}_1 \cM_{d+\alpha}(z)}{(|z| + 1)^{d + \alpha + 1}} = {\kappa^{2d+2\alpha}_1}\cN_{d+\alpha}(z).
		$$
		Thus, to prove the upper bound and gradient estimates, it remains, by \eqref{upper-bound-1}, to show that
		\begin{align}\label{HA1}
			\sI := \E\left( \left|\nabla^{j}_{z} q_1(z - Z_1^{(1)})\right| \1_{\left\{z - Z_1^{(1)} \in Q_{|z|/2}(0)\right\}}\right) \lesssim \kappa_1^{4+4d+3\alpha}  \cN_{d+\alpha}(z).
		\end{align}
		To proceed, we choose a suitable $N$ and a set $\{z_i := (x_i, v_i) : 1 \leq i \leq N\} \subset Q_{|z|/2}(0)$ such that
		$$
		Q_{|z|/2}(0) \subset \bigcup_{i=1}^N Q_1(z_i),
		$$
		and for some $C_1,C_2$ only depending on $d$ and $\alpha$,
		\begin{align}\label{Selection}
			\sum_{i=1}^N \sup_{w \in Q_1(z_i)} \sqrt{\left|\nabla^{j}_{w} q_1(w)\right|}
			\leq \sum_{i=1}^N\frac{C_1\kappa_1^{d+1}}{(1+|z_i|)^{d+1}}\leq C_2\kappa_1^{d+1}.
		\end{align}
		Hence,
		\[
		\sI \leq \sum_{i=1}^N \E\left( \left|\nabla^{j}_{z} q_1(z - Z_1^{(1)})\right| \1_{\left\{z - Z_1^{(1)} \in Q_1(z_i)\right\}}\right)
		\leq \sum_{i=1}^N \sup_{w \in Q_1(z_i)} \left|\nabla^{j}_{w} q_1(w)\right| \P\left(z - Z_1^{(1)} \in Q_1(z_i)\right).
		\]
		Noting that by Lemma \ref{lemma:small-jump} again, 
		$$
		\sup_{w \in Q_1(z_i)}\left|\nabla^{j}_{w} q_1(w)\right| \lesssim \sup_{w \in Q_1(z_i)}
		\frac{\kappa^{2(d+\alpha-1)}_1}{(|w| + 1)^{2(d + \alpha - 1)}} \lesssim \frac{\kappa^{2(d+\alpha-1)}_1}{(|z_i| + 1)^{2(d + \alpha - 1)}},
		$$
		by Lemmas \ref{lemma2} and \ref{lemma3}, we further have
		\begin{align}\label{eq12}
			\sI &\lesssim \sum_{i=1}^N \sup_{w \in Q_1(z_i)} \sqrt{\left|\nabla^{j}_{w} q_1(w)\right|} \times \frac{\kappa^{d+\alpha-1}_1}{(|z_i| + 1)^{d + \alpha - 1}} \P\left(Z_1^{(1)} \in Q_1(z - z_i)\right)\nonumber\\
			&\lesssim \sum_{i=1}^N \sup_{w \in Q_1(z_i)} \sqrt{\left|\nabla^{j}_{w} q_1(w)\right|} \times \frac{\kappa^{3d+3\alpha+2}_1}{(|z_i| + 1)^{d + \alpha - 1}} \frac{
				\cM_{d+\alpha}(z - z_i)}{(|z - z_i| + 1)^{d + \alpha + 1}}.
		\end{align}
		Since $z_i \in Q_{|z|/2}(0)$, we have
		$$
		|z_i| = \sqrt{|x_i|^2 + |v_i|^2} \leq \tfrac{\sqrt{2}}{2} |z| \Rightarrow |z - z_i| + 1 \geq (1 - \tfrac{\sqrt{2}}{2}) |z| + 1,
		$$
		which, combined with \eqref{eq12}, gives
		\begin{align}\label{eq13}
			\sI \lesssim \frac{\kappa_1^{2+3d+3\alpha}}{(|z| + 1)^{d + \alpha + 1}} \sum_{i=1}^N \sup_{w \in Q_1(z_i)} \sqrt{\left|\nabla^{j}_{w} q_1(w)\right|} \times \frac{\cM_{d+\alpha}(z - z_i)}{(|z_i| + 1)^{d + \alpha - 1}}.
		\end{align}
		Define
		$$
		\inf_{s \in [0,1]} |\Gamma_s z| =: K_0.
		$$
		If $K_0 \leq 8$, then by \eqref{Selection} and \eqref{eq13},
		\begin{align*}
			\sI &\lesssim \frac{\kappa_1^{2+3d+3\alpha}}{(|z| + 1)^{d + \alpha + 1}} \sum_{i=1}^N \sup_{w \in Q_1(z_i)} \sqrt{\left|\nabla^{j}_{w} q_1(w)\right|} \leq \frac{C_2\kappa_1^{4+4d+3\alpha}}{(|z| + 1)^{d + \alpha + 1}} \nonumber\\
			&\leq \frac{C_2\kappa_1^{3+4d+3\alpha}}{(|z| + 1)^{d + \alpha + 1}} \frac{9^{d + \alpha}}{(K_0 + 1)^{d + \alpha + 1}} = C_2 {\kappa_1^{4+4d+3\alpha}} 9^{d + \alpha} \cN_{d+\alpha}(z).
		\end{align*}
		Now assume $K_0 > 8$. If $|z_i| > K_0 / 4$, then
		\begin{align}\label{eq14}
			\frac{\cM_{d+\alpha}(z - z_i)}{(|z_i| + 1)^{d + \alpha - 1}} \leq \frac{1}{(|z_i| + 1)^{ d + \alpha-1}} \lesssim \frac{1}{(K_0 + 1)^{ d + \alpha-1}} = \cM_{d+\alpha}(z).
		\end{align}
		If $|z_i| \leq K_0 / 4$, then
		$$
		\inf_{s \in [0,1]} |\Gamma_s(z - z_i)| \geq \inf_{s \in [0,1]} |\Gamma_s z| - \sup_{s \in [0,1]} |\Gamma_s z_i| \geq K_0 - 2|z_i| \geq \tfrac{K_0}{2}.
		$$
		In this case, by the definition of $K_0$ and $\cM_{d+\alpha}(z)=\left(\inf_{s\in [0,1]}|\Gamma_sz| +1 \right)^{1-d-\alpha}$, we have
		\begin{align}\label{eq15}
			\cM_{d+\alpha}(z - z_i) \lesssim \cM_{d+\alpha}(z).
		\end{align}
		Substituting \eqref{eq14} and \eqref{eq15} into \eqref{eq13}, we obtain
		\begin{align*}
			\sI & \lesssim \frac{\kappa_1^{2+3d+3\alpha} \cM_{d+\alpha}(z)}{(|z| + 1)^{d + \alpha + 1}} \sum_{i=1}^N \sup_{w \in Q_1(z_i)} \sqrt{\left|\nabla^{j}_{w} q_1(w)\right|} \nonumber\\
			&\leq \frac{C_2\kappa_1^{3+4d+3\alpha} \cM_{d+\alpha}(z)}{(|z| + 1)^{d + \alpha + 1}} =C_2\kappa_1^{4+4d+3\alpha}  \cN_{d+\alpha}(z).
		\end{align*}
		Thus, we have proven \eqref{HA1}, and hence
		$$
		\left|\nabla^{j}_{z} p_1(z)\right| \lesssim {\kappa_1^{3+4d+3\alpha}}  \cN_{d+\alpha}(z).
		$$
		For general $t > 0$ and $\kappa_0>0$, the result follows by \eqref{Scaling}. The proof is now complete by Lemma \ref{lemma3}.
	\end{proof}

	\section{Proof of Lemma \ref{Le11}}	
	\begin{proof}
		For $v\neq 0$, let $Q_v$ be the rotation matrix such that for $\bar v=v/|v|$, we have
		$$
		Q_v^T \bar v = (1,0,\cdots,0) =: \e_1.
		$$
		Since $Q_v$ is an orthogonal matrix, i.e., $Q_v Q_v^T = \mI$, for any $x = (x_1,\dots, x_d)^T \in \R^d$, we have
		\[
		\langle Q_v x, \bar v \rangle = \langle x, Q_v^T \bar v \rangle = x_1 \quad \text{and} \quad |Q_v x - v| = |x - Q_v^T v| = |x - |v|\e_1|.
		\]
		Thus, for each fixed $v \in \R^d \setminus \{0\}$ and $\beta > 0$, by definition we have
		\begin{align}
			\mathcal{N}_\beta(Q_v x, v)
			= \frac{1}{(1 + |z|)^{1 + \beta}} 
			\begin{cases}
				(1 + |x|)^{1 - \beta}, & x_1 < 0, \\
				(1 + \sqrt{|x|^2 - x_1^2})^{1 - \beta}, & 0 \leq x_1 \leq |v|, \\
				(1 + |x - |v|\e_1|)^{1 - \beta}, & x_1 > |v|.
			\end{cases}
		\end{align}
		Below, we frequently use the following simple fact: for $a, b \geq 0$,
		$$
		a \vee b \leq a + b \leq 2(a \vee b).
		$$
		Since $\det(Q_v) = 1$ and $|Q_v x| = |x|$, by a change of variables, we can write
		\begin{align}
			\int_{\R^d} |x|^q \mathcal{N}_\beta(x,v) \,\mathrm{d}x 
			&= \int_{\R^d} |Q_v x|^q \mathcal{N}_\beta(Q_v x, v) \,\mathrm{d}x \nonumber\\
			&= \int_{x_1 < 0} \frac{|x|^q}{(1 + |z|)^{1 + \beta}} (1 + |x|)^{1 - \beta} \,\mathrm{d}x \nonumber\\
			&\quad + \int_{x_1 \in [0, |v|]} \frac{|x|^q}{(1 + |z|)^{1 + \beta}} (1 + \sqrt{|x|^2 - x_1^2})^{1 - \beta} \,\mathrm{d}x \nonumber\\
			&\quad + \int_{x_1 > |v|} \frac{|x|^q}{(1 + |z|)^{1 + \beta}} (1 + |x - |v|\e_1|)^{1 - \beta} \,\mathrm{d}x \nonumber\\
			&=: H_\beta(v) + K_\beta(v) + L_\beta(v). \label{KM1}
		\end{align}
		For $H_\beta(v)$, using symmetry and polar coordinates, we have
		\begin{align*}
			H_\beta(v) &= \frac{1}{2} \int_{\R^d} \frac{|x|^q}{(1 + |z|)^{1 + \beta}} (1 + |x|)^{1 - \beta} \,\mathrm{d}x\asymp \int_0^\infty \frac{r^q}{(1 + |v| + r)^{1 + \beta}} (1 + r)^{1 - \beta} \,\mathrm{d}r \nonumber\\
			&\asymp \int_0^\infty \frac{r^q (1 \vee r)^{1 - \beta}}{((1 + |v|) \vee r)^{1 + \beta}} \,\mathrm{d}r\asymp \int_0^{1 + |v|} \frac{r^q (1 \vee r)^{1 - \beta}}{(1 + |v|)^{1 + \beta}} \,\mathrm{d}r + \int_{1 + |v|}^\infty r^{q - 2\beta} \,\mathrm{d}r.
		\end{align*}
		Since $q < 2\beta - 1$, we obtain
		\begin{align}
			H_\beta(v) \lesssim (1 + |v|)^{q - 2\beta + 1}. \label{KM2}
		\end{align}
		For $K_\beta(v)$, by a change of polar coordinates, we have
		\begin{align*}
			K_\beta(v)& \asymp \int_0^\infty \frac{r^{d-2}}{(1+r)^{\beta-1}}\dif r \int_{0}^{|v|} \frac{ (r+x_1)^{q}}{(1+|v|+ r+x_1)^{1+\beta}}  \dif x _1\nonumber\\
			&\asymp  \int_0^\infty \frac{r^{d-2}}{(1+r)^{\beta-1}(1+|v|+ r)^{1+\beta}}\dif r \int_{0}^{|v|} (r+x_1)^{q}\dif x _1\nonumber\\
			&=\int_0^\infty \frac{r^{d-2}|v|}{(1+r)^{\beta-1}(1+|v|+ r)^{1+\beta}} \int^1_0(r+s|v|)^q\dif s \dif r\\
			&\asymp \frac{|v|}{(1+|v|)^{1+\beta}}
			\int_0^{1+|v|} \frac{r^{d-2}}{(1+r)^{\beta-1}} \int^1_0(r+s|v|)^q\dif s \dif r\\
			&\quad+|v|\int_{1+|v|}^\infty r^{-2-2\beta+d} \int^1_0(r+s|v|)^q\dif s \dif r
			=:I_1+I_2.
		\end{align*}
		For $I_1$,  for 
		$r<1+|v|$ and $|v|\geq 1$, 
		since $\int^1_0(r+s|v|)^{q}\dif s\asymp (1+|v|)^{q},$
		we have
		\begin{align*}
			I_1&\asymp \frac{|v|(1+|v|)^{q}}{(1+|v|)^{1+\beta}}
			\int_0^{1+|v|} \frac{r^{d-2}}{(1+r)^{\beta-1}}\dif r\\
			&\asymp(1+|v|)^{q-\beta}
			\left[\int_0^1 r^{d-2}\dif r+ \int^{|v|+1}_1r^{d-1-\beta}\dif r\right]
			\asymp (1+|v|)^{q-\beta}.
		\end{align*}
		For $I_2$,  by $q<2\beta-d+1$, we have for $|v|\geq 1$,
		$$
		I_2 \asymp |v| \int_{1+|v|}^\infty r^{-2-2\beta+d+q} \dif r \asymp (1+|v|)^{q+d-2\beta}.
		$$
		Hence,
		\begin{align}\label{KM3}
			K_\beta(v) \asymp(1+|v|)^{q-\beta}.
		\end{align}
		Finally, we consider the term \( L_\beta(v) \). By changing to polar coordinates, we have
		\begin{align}\label{Asymp6}
			L_{\beta}(v) & \asymp \int_0^\infty \int_{|v|}^\infty \frac{(r+x_1 )^{q} r^{d-2}}{(1+|v|+r+x_1)^{1+\beta} (1+r+x_1-|v|)^{\beta-1}} \, \mathrm{d}x_1 \, \mathrm{d}r \nonumber \\
			& = \int_0^\infty r^{d-2} \, \mathrm{d}r \int_0^\infty \frac{(r+|v|+s)^{q}}{(1+2|v|+r+s)^{1+\beta} (1+r+s)^{\beta-1}} \, \mathrm{d}s \nonumber \\
			& \asymp \int_0^\infty r^{d-2} \, \mathrm{d}r \int_0^\infty \frac{((r+|v|)\vee s)^{q}}{((1+2|v|+r)\vee s)^{1+\beta} ((1+r)\vee s)^{\beta-1}} \, \mathrm{d}s.
		\end{align}
		If \( q = 2\beta - d \), then
		\begin{align*}
			L_{\beta}(v) & \gtrsim \int_{1+|v|}^\infty r^{d-2} \, \mathrm{d}r \int_r^\infty \frac{((r+|v|)\vee s)^{q}}{((1+2|v|+r)\vee s)^{1+\beta} ((1+r)\vee s)^{\beta-1}} \, \mathrm{d}s \nonumber \\
			& \asymp \int_{1+|v|}^\infty r^{d-2} \, \mathrm{d}r \int_r^\infty s^{-d} \, \mathrm{d}s \stackrel{s=ru}{=} \int_{1+|v|}^\infty r^{-1} \, \mathrm{d}r \int_1^\infty u^{-d} \, \mathrm{d}u = \infty.
		\end{align*}
		This implies \eqref{AG6}.
		
		Next, we assume \( q < 2\beta - d \). For simplicity, we define:
		\[
		a := 1 + r, \quad b := |v| + r, \quad c := 1 + 2|v| + r,
		\]
		and
		\[
		\mathcal{R}(r) := \int_0^\infty \frac{(b \vee s)^q}{(c \vee s)^{1+\beta} (a \vee s)^{\beta-1}} \, \mathrm{d}s.
		\]
		We can then split \( \mathcal{R}(r) \) into parts:
		\begin{align*}
			\mathcal{R}(r) &= \left( \int_0^a + \int_a^b + \int_b^c + \int_c^\infty \right) \frac{(b \vee s)^q}{(c \vee s)^{1+\beta} (a \vee s)^{\beta-1}} \, \mathrm{d}s \\
			&= b^q c^{-1-\beta} a^{2-\beta} + b^q c^{-1-\beta} \int_a^b s^{1-\beta} \, \mathrm{d}s \\
			&\quad + c^{-1-\beta} \int_b^c s^{q+1-\beta} \, \mathrm{d}s + \int_c^\infty s^{q-2\beta} \, \mathrm{d}s \\
			&\lesssim b^q c^{-1-\beta} a^{2-\beta} + b^{q+2-\beta} c^{-1-\beta} + c^{q+1-2\beta}.
		\end{align*}
		Since \( q < 2\beta - d \Rightarrow q + 1 - 2\beta < 0 \), for \( |v| \geq 1 \), we have:
		\begin{align*}
			\mathcal{R}(r) &\lesssim (|v| + r)^q (1 + 2|v| + r)^{-1-\beta} \left[ (1 + r)^{2-\beta} + (|v| + r)^{2-\beta} \right] + (1 + 2|v| + r)^{q+1-2\beta} \\
			&\lesssim (|v| + r)^{q-1-\beta} (1 + r)^{2-\beta} + (|v| + r)^{q+1-2\beta}.
		\end{align*}
		Thus, for \( |v| \geq 1 \),
		\begin{align}
			L_\beta(v) &\lesssim \int_0^\infty r^{d-2} \left[ (|v| \vee r)^{q-1-\beta} (1 \vee r)^{2-\beta} + (|v| \vee r)^{q+1-2\beta} \right] \, \mathrm{d}r \nonumber \\
			&\lesssim |v|^{q-1-\beta} \left[ 1 + \int_1^{|v|} r^{d-2} \left( r^{2-\beta} + |v|^{2-\beta} \right) \, \mathrm{d}r \right] + \int_{|v|}^\infty r^{d+q-1-2\beta} \, \mathrm{d}r \lesssim |v|^{q-1-\beta}. \label{KM4}
		\end{align}
		Combining \eqref{KM1}, \eqref{KM2}, \eqref{KM3}, and \eqref{KM4}, we obtain \eqref{Ineq-in-remark2}.
	\end{proof}

\end{document}